\documentclass[10pt,a4paper]{article}
\usepackage{hyperref}

\usepackage{a4wide}
\usepackage{graphicx,xcolor,textpos}
\usepackage{helvet}
\usepackage[utf8]{inputenc}
\usepackage[T1]{fontenc}
\usepackage[english]{babel}
\usepackage{amsmath}
\usepackage{amsfonts}
\usepackage{amssymb,amsthm}
\usepackage{graphicx}
\usepackage{mathtools}
\usepackage{bbold}
\usepackage{comment}
\usepackage{csquotes}
\usepackage{tikz-cd}
\usepackage{cases}
\usepackage{listings}
\usepackage{algorithm}
\usepackage{algorithmic}
\newtheorem{thm}{Theorem}[section]

\newtheorem{lemma}[thm]{Lemma}
\newtheorem{prop}[thm]{Proposition}

\newtheorem{cor}[thm]{Corollary}
\theoremstyle{definition}
\newtheorem{df}[thm]{Definition}

\newtheorem{ques}{Question}
\theoremstyle{remark}
\newtheorem*{rem}{Remark}
\theoremstyle{definition}
\newtheorem{ex}[thm]{Example}
\theoremstyle{definition}

\newcommand{\Z}{\mathbb{Z}}
\newcommand{\N}{\mathbb{N}}
\newcommand{\Q}{\mathbb{Q}}
\newcommand{\R}{\mathbb{R}}
\newcommand{\C}{\mathbb{C}}
\newcommand{\norm}[1]{\|#1\|}
\renewcommand{\Im}{\text{Im\,}}
\DeclareMathOperator{\Aut}{Aut}
\DeclareMathOperator{\Inn}{Inn}
\DeclareMathOperator{\Out}{Out}
\DeclareMathOperator{\Inv}{Inv}
\DeclareMathOperator{\Com}{Com}
\DeclareMathOperator{\Gal}{Gal}
\DeclareMathOperator{\lcm}{lcm}
\DeclareMathOperator{\ggd}{gcd}
\DeclareMathOperator{\GL}{GL}
\DeclareMathOperator{\RF}{RF}

\DeclareMathOperator{\Id}{Id}

\title{Residual Finiteness Growth in Virtually Abelian Groups}
\author{Jonas Der\'e and Joren Matthys\thanks{KU Leuven Campus Kulak Kortrijk, Department of Mathematics, Research unit `Algebraic Topology and Group Theory', B-8560 Kortrijk, Belgium. The authors were supported by Internal Funds KU Leuven (project number 3E220559).}}
\date{}
\begin{document}
	\maketitle
	\begin{abstract}
	A group $G$ is called residually finite if for every non-trivial element $g \in G$, there exists a finite quotient $Q$ of $G$ such that the element $g$ is non-trivial in the quotient as well. Instead of just investigating whether a group satisfies this property, a new perspective is to quantify residual finiteness by studying the minimal size of the finite quotient $Q$ depending on the complexity of the element $g$, for example by using the word norm $\norm{g}_G$ if the group $G$ is assumed to be finitely generated. The residual finiteness growth $\RF_G: \N \to \N$ is then defined as the smallest function such that if $\norm{g}_G \leq r$, there exists a morphism $\varphi: G \to Q$ to a finite group $Q$ with $|Q| \leq \RF_G(r)$ and $\varphi(g) \neq e_Q$. 
	
	Although upper bounds have been established for several classes of groups, exact asymptotics for the function $\RF_G$ are only known for very few groups such as abelian groups, the Grigorchuk group and certain arithmetic groups. In this paper, we show that the residual finiteness growth of virtually abelian groups equals $\log^k$ for some $k \in \N$, where the value $k$ is given by an explicit expression. As an application, we show that for every $m \geq 1$ and every $1 \leq k \leq m$, there exists a group $G$ containing a normal abelian subgroup of rank $m$ and with $\RF_G \approx \log^k$.
	\end{abstract}
	\section{Introduction}
In a residually finite group $G$ there exists by definition for every non-trivial element $g\in G$ a group morphism $\varphi: G \to Q$ to a finite group $Q$ such that $\varphi(g)$ is still non-trivial. Examples of such groups include finite groups, free groups, finitely generated virtually nilpotent and more generally polycyclic groups, finitely generated linear groups, and fundamental groups of compact 3-manifolds. As every residually finite group is Hopfian, the Baumslag-Solitar group $BS(2,3)$ on the other hand is a non-example.

Quite recently, the quantification of this property for finitely generated groups $G$ was initiated by Bou-Rabee in his paper \cite{bou2010quantifying}. More precisely, Bou-Rabee studied the asymptotic behavior of the (normal) residual finiteness growth $\RF_G: \N \to \N$, i.e. the minimal function such that if $\norm{g}_G \leq r$, then $Q$ exists as above with $|Q| \leq \RF_G(r)$. Here $\norm{\cdot}_G$ denotes a fixed word norm on $G$, induced by a finite generating set $S$, so satisfying $\norm{g}_G \leq r$ if and only if $g$ can be written as a product of at most $r$ elements in $S \cup S^{-1}$. Although needed for the definition, the exact generating set $S$ does not play a role when considering $\RF_G$ up to a certain equivalence relation, see Section \ref{sec_resid_fin_growth} for more details.
	
	Since this first paper in 2010, the group invariant $\RF_G$ has been studied for different classes of residually finite groups. In several cases upper bounds have been found, for example in the case of linear groups in \cite{franz2017quantifying}, $S$-arithmetic subgroups of higher rank Chevalley groups in \cite{bou2012quantifying}, the first Grigorchuk group \cite{bou2010quantifying}, free groups \cite{bradford2019short}, lamplighter groups \cite{bou2019residual}, \dots~However, exact bounds are rare in the literature, most strikingly illustrated by the fact that the exact asymptotic behavior for free groups is still unknown. 
	
	In the same light, the exact connection between the residual finiteness growth of groups and certain group constructions remains largely unknown, for instance when considering its automorphism group, forming semi-direct products or wreath products,\ldots \ In the case of taking finitely generated subgroups $K$ of $G$, it has been shown in \cite{bou2010quantifying} that $\RF_K$ is bounded above by $\RF_G$. Furthermore, if $K$ is a finite extension of $G$ with index $n = [G:K]$, then $\RF_G$ is itself bounded above by $(\RF_K)^n$. We refer to the survey article \cite{survey2022} for a more detailed discussion about the known results and the relation to other residual properties such as conjugacy separability.  
	
An interesting result in this context is the following from \cite{bou2011asymptotic}. In this paper, the authors characterize the virtually nilpotent groups within the class of linear groups as exactly those group $G$ for which $\RF_G$ is bounded by $\log^k$ for some $k\geq 0$. This raises the following question with respect to the exact asymptotics of $\RF_G$ for these groups.
	\begin{ques}
		\label{Q1}
	Let $G$ be a finitely generated virtually nilpotent group, does there exist a $k \geq 0$ with $\RF_G$ equal to $\log^k$?
	\end{ques}
Note that the residual finiteness growth is bounded if and only if the group is finite, and if the group contains an infinite cyclic group, then $\log \preceq \RF_G$. So far, Question \ref{Q1} has only been answered for abelian groups, where $\RF_G$ is either bounded (so $\log^0$) for the finite ones or $\log$ for the infinite groups respectively. The answer is also positive for all examples of nilpotent groups for which $\RF_{G}$ has been computed, as for instance the Heisenberg group for which $\RF_G$ equals $\log^3$. In this paper, we give a positive answer to Question \ref{Q1} for all virtually abelian groups. 
	\begin{thm}
		Let $G$ be a finitely generated virtually abelian group, then $\RF_G$ equals $\log^k$ for some $k\geq 0$.
	\end{thm}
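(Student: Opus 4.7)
The plan is to reduce the statement to an analysis of $\mathbb{F}_p[H]$-module structures on finite quotients of the abelian part of $G$, where $H$ is a finite holonomy group. I would first replace $G$ by a version containing a normal, torsion-free, finite-index abelian subgroup $A \cong \Z^n$; in a finitely generated virtually abelian group such an $A$ always exists, obtained by intersecting the finitely many conjugates of a finite-index abelian subgroup and factoring out the characteristic torsion part. Setting $H = G/A$, this gives an action $\rho \colon H \to \GL_n(\Z)$, and a standard coset-decomposition argument shows that the restriction to $A$ of the word norm on $G$ is bi-Lipschitz equivalent to the standard $\ell^1$-norm on $\Z^n$. For any finite quotient $\varphi \colon G \to Q$, the subgroup $N := \ker(\varphi) \cap A$ is an $H$-invariant sublattice with $[A:N] \leq |Q|$, and conversely $G/N$ has size $[G:A] \cdot [A:N]$ and kills $g \in A$ exactly when $g \in N$. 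Since elements outside $A$ are already separated by $G \to G/A$ at bounded cost, the problem reduces to bounding, for $g \in A \setminus \{0\}$ with $\norm{g}_A \leq r$, the minimal index of an $H$-invariant sublattice of $A$ avoiding $g$.

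A Chinese remainder and elementary-divisor argument lets me take the optimal sublattice $N$ to contain $pA$ for a single prime $p$, so the question becomes: in $V_p := A/pA$, regarded as an $\mathbb{F}_p[H]$-module, find the smallest codimension $d_p(\bar g)$ of an $H$-invariant subspace that avoids $\bar g$. This codimension equals the $\mathbb{F}_p$-dimension of the smallest simple quotient of the cyclic submodule $\mathbb{F}_p[H]\cdot\bar g$. For the upper bound $\RF_G \preceq \log^k$, I would pick a prime $p = O(\log r)$ avoiding both the primes dividing $|H|$ and those dividing $\ggd$ of the coordinates of $g$; both sets contain only $O(\log r / \log \log r)$ elements, so such a $p$ is available by Bertrand-type estimates. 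For these primes $V_p$ is semisimple, and $d_p(\bar g)$ is bounded by the maximum $\mathbb{F}_p$-dimension of an irreducible submodule of $V_p$. The natural definition of $k$ is the supremum of these maximal dimensions as $p$ varies over the good primes; finiteness of $k$ follows because the splitting behaviour of $\rho$ at good primes is controlled by a finite amount of Galois-theoretic data. This gives a separating quotient of size $[G:A] \cdot p^{d_p(\bar g)} = O((\log r)^k)$.

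The main obstacle will be the matching lower bound $\RF_G \succeq \log^k$. I would produce, for each large $r$, an element $g_r \in A$ with $\norm{g_r}_A \leq r$ whose separation costs at least $c(\log r)^k$. The strategy is to fix a $\Q[H]$-summand $W \subseteq A \otimes \Q$ whose mod-$p$ reductions realise the maximal simple dimension $k$ for an infinite, Chebotarev-dense set $\mathcal{P}$ of primes, and then to build $g_r \in A \cap W$ with two competing properties: it should be divisible by every prime in $\mathcal{P}$ below a threshold $\asymp \log r$, forcing the smallest usable prime to be of order $\log r$, while at every admissible prime its image must generate a full copy of the $k$-dimensional simple $\mathbb{F}_p[H]$-module, forcing the codimension to be exactly $k$. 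Balancing these representation-theoretic constraints against the word-length bound is the delicate step; once achieved, the inequality $p^{d_p(\bar g_r)} \succeq (\log r)^k$ follows and completes the proof.
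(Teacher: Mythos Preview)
Your reduction to $H$-invariant sublattices of $A\cong\Z^n$ and the idea of separating via $A/pA$ both match the paper. The gap is in your choice of $k$ and, consequently, in the lower bound. You define $k$ as the \emph{supremum} over primes $p\nmid|H|$ of the maximal dimension of a simple $\mathbb{F}_p[H]$-submodule of $V_p$, but this is too large. Take $H=\Z/3$ acting on $A=\Z^2$ by a matrix of order~$3$: over~$\C$ the representation splits into two one-dimensional characters, and indeed $\RF_G\approx\log$, yet for every prime $p\equiv 2\pmod 3$ the module $V_p$ is $\mathbb{F}_p$-irreducible of dimension~$2$, so your supremum equals~$2$. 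Your upper bound $\preceq\log^2$ is then correct but not sharp, and the matching lower bound $\succeq\log^2$ that you aim for is simply false for this group. The paper's fix is to restrict the upper-bound argument to primes $p\equiv 1\pmod{|H|}$ (using Dirichlet's theorem rather than a Bertrand estimate), so that $\mathbb{F}_p$ is a splitting field and the $\mathbb{F}_p$-simple dimensions coincide with the $\C$-simple dimensions; the correct $k$ is the maximal $\C$-irreducible dimension of~$\rho$.

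Even with the right $k$, your lower-bound sketch does not close. Making $g_r$ divisible by the primes in a Chebotarev set $\mathcal P$ does not force the smallest usable prime to be of order $\log r$: a small prime outside $\mathcal P$ can still separate $g_r$, possibly via a simple quotient of dimension strictly smaller than~$k$. And for the lower bound you must control \emph{every} $H$-invariant sublattice avoiding $g_r$, not only those containing some $pA$; your Chinese-remainder reduction to that case is legitimate for the upper bound only. The paper's route is different in kind: after passing to a single $\Q$-irreducible block it invokes the Jordan--Zassenhaus theorem to replace $\Inv(\varphi)$, up to a uniformly bounded index factor, by the subfamily $\Com(\varphi)$ of images of integer matrices $B$ commuting with~$\varphi$. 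A Galois-descent computation then shows that every such $B$ satisfies $|\det B|=x^{k}$ for an integer $x$ with $x\Z^m\subset\Im B$, so the single element $g_s=\lcm(1,\dots,s)\,e_1$ forces $x>s$ whenever $g_s\notin\Im B$ and hence $[\Z^m:\Im B]>s^{k}$, with no Chebotarev input needed on this side.
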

Moreover, this $k$ has an explicit form, depending on an induced integral representation. Indeed, take any abelian torsion-free normal subgroup $K \triangleleft~G$ of finite index, then the finite group $H = G/K$ acts via conjugation on $K$, leading to a representation $\varphi: H \to \Aut(K) \cong \GL(m,\Z)$ with $m$ the rank of $K$. In the case of crystallographic groups, so discrete and cocompact subgroups of Euclidean isometries, the representation $\varphi$ is equal to the holonomy representation, getting its name from its geometric interpretation.

Over the complex numbers $\mathbb{C}$, the representation $\varphi$ decomposes into irreducible subrepesentations of dimensions $\leq m$. We establish the following refinement of the previous theorem:
	\begin{thm} \label{thm_main}
		Let $G$ be a virtually abelian group with torsion-free abelian subgroup of rank $m$. Then $\RF_G$ equals $\log^k$, where $0 \leq k \leq m$ is the maximal dimension of the irreducible subrepresentations of $\varphi$ over $\mathbb{C}$.
	\end{thm}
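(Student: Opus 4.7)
The plan is to establish matching asymptotic upper and lower bounds of the form $(\log r)^k$ on $\RF_G$. A standard initial reduction shows that any $g \in G \setminus K$ is detected by the bounded-size quotient $G \to G/K = H$, so we may restrict to $g \in K$; since $K$ has finite index in $G$, the restriction of $\|\cdot\|_G$ to $K$ is bi-Lipschitz equivalent to any Euclidean norm on $K \cong \Z^m$, and finite quotients of $G$ separating such a $g$ correspond, up to a factor $|H|$, to $H$-invariant finite-index subgroups $N \leq K$ with $g \notin N$. The whole problem thus reduces to an analysis of the $\Z[H]$-module $K$.

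For the upper bound, decompose $\varphi \otimes \C$ into irreducibles of maximal dimension $k$. By Chebotarev's density theorem, a positive-density set of primes $p$ splits completely in the character field of $\varphi$; for such a prime (also avoiding the finitely many primes dividing $|H|$), the $\mathbb{F}_p[H]$-module $K/pK$ is semisimple with irreducible summands of dimensions equal to those of the $\C$-irreducibles. Given $g$ with $\|g\| \leq r$, only $O(\log r)$ primes can divide every coordinate of $g$, hence a good prime $p = O(\log r)$ exists with $\bar g \neq 0$ in $K/pK$. Writing $K/pK = \bigoplus V_i$, pick $i$ with $\bar g$'s $V_i$-component nonzero and $\dim V_i \leq k$ minimal; the preimage of $\bigoplus_{j \neq i} V_j$ is an $H$-invariant $N \leq K$ of index at most $p^{\dim V_i} \leq p^k = O((\log r)^k)$ that misses $g$.

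For the lower bound, fix a $\Q$-irreducible summand $V^{(\Q)} \leq K \otimes \Q$ whose $\C$-decomposition contains a $k$-dimensional $\C$-irreducible, and set $L = K \cap V^{(\Q)}$; any $H$-invariant $N \leq K$ missing $g \in L$ restricts to $L \cap N$, which is $H$-invariant in $L$, still misses $g$, and has $[L : L \cap N] \leq [K:N]$, so it suffices to bound $[L:N']$ below for well-chosen $g \in L$. The key analytic step is the local inequality $[L:N'] \geq p^{k(v_p(g)+1)}$ for every prime $p \nmid |H|$ and every $H$-invariant $N' \leq L$ missing $g$, where $v_p(g) = \max\{j : g \in p^j L\}$. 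To establish this, decompose $L \otimes \Q_p = \bigoplus T_\tau$ into $\Q_p[H]$-irreducibles; each $T_\tau$ has $\Q_p$-dimension at least $k$, because the $\C$-irreducibles in $V^{(\Q)} \otimes \C$ group into $\Q_p$-irreducibles via Frobenius orbits on the characters. Since $p \nmid |H|$, $\Z_p[H]$ is a product of maximal orders, so $L \otimes \Z_p = \bigoplus L_\tau$ with $L_\tau$ a $\Z_p[H]$-lattice in $T_\tau$; the $H$-invariant submodules of each $L_\tau/p^n L_\tau$ form a chain under the $p$-adic filtration (by a Nakayama-type argument using the irreducibility of $T_\tau$), and assembling these across $\tau$ yields the bound. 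Finally, taking $g_P = \lcm(1, 2, \ldots, P) \cdot h_0$ for a primitive $h_0 \in L$ gives $\|g_P\| \asymp e^P$ by the prime number theorem and $v_p(g_P) = \lfloor \log_p P \rfloor$ for $p \leq P$, from which $[L:N'] \geq P^k \asymp (\log \|g_P\|)^k$ for every relevant prime; the finitely many bad primes $p \mid |H|$ contribute only bounded factors.

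The main obstacle is the local bound $[L:N'] \geq p^{k(v_p(g)+1)}$, which hinges on two structural facts: every simple $\Q_p[H]$-constituent of $L \otimes \Q_p$ has dimension at least $k$, and (for $p \nmid |H|$) the $H$-invariant submodules of each local component $L_\tau/p^n L_\tau$ form a linear chain, so that an element surviving only in the $v_p(g)$-th layer of the $p$-adic filtration forces the quotient to retain all earlier layers. A secondary technicality is handling non-trivial ramification and isotypic mixing within $L \otimes \Z_p$, which affects the precise local exponent but not the resulting $(\log r)^k$ asymptotic; once the chain structure is pinned down, the rest of the lower bound is an elementary $p$-adic computation combined with the prime number theorem.
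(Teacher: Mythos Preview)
Your upper bound is essentially the paper's argument: the paper uses Dirichlet to pick $p\equiv 1\pmod{|H|}$ (so $\mathbb{F}_p$ contains the $|H|$-th roots of unity and is a splitting field), while you use Chebotarev to pick $p$ splitting in the character field. Both produce a positive-density set of primes over which the mod-$p$ reduction is semisimple with the same irreducible dimensions as over $\C$, and from there the arguments coincide.

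Your lower bound takes a genuinely different route. The paper works \emph{globally}: after restricting to a $\Q$-irreducible summand, it invokes the Jordan--Zassenhaus theorem to replace an arbitrary $H$-invariant sublattice $\Im B$ (up to a bounded index factor) by one coming from a matrix $B$ that \emph{commutes} with $\varphi$; for such $B$ one has $\det B = x^k$ with $x\in\Z$ and $x\Z^m\subset\Im B$, and this single algebraic fact handles all primes at once. You instead work \emph{locally}: for each $p\nmid|H|$ the maximal-order structure of $\Z_p[H]$ forces the $H$-invariant sublattices of each isotypic piece to form a chain with jumps of size $p^{d_\tau}$, $d_\tau\ge k$, yielding the local bound $[L_p:N'_p]\ge p^{k(v_p(g)+1)}$. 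This is correct and is an attractive alternative viewpoint.

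The gap is at the bad primes $p\mid|H|$. Your assertion that they ``contribute only bounded factors'' is true, but it is not a triviality, and the mechanism you suggest (``once the chain structure is pinned down'') need not apply: when $p\mid|H|$ the $\mathbb{F}_p[H]$-composition factors of $L/pL$ can have dimension strictly less than $k$ (e.g.\ the standard $2$-dimensional lattice for $S_3$ at $p=3$ admits an invariant sublattice of index $3$), so invariant sublattices need not form a chain with $p^k$-jumps, and the naive elementary-divisor estimate only gives $[L_p:N'_p]\ge p^{\,v_p(g)+1}$, i.e.\ $\gtrsim P$ rather than $P^k$. What actually saves you is that the $H$-invariant lattices in each $\Q_p$-irreducible constituent $T_\tau$ are \emph{finite modulo homothety}; combining this with $\dim_{\Q_p}T_\tau\ge k$ yields a constant $c_p>0$ with $[L_p:N'_p]\ge c_p\,p^{k(v_p(g)+1)}$. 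This finiteness is precisely the local content of Jordan--Zassenhaus, and it is exactly the ingredient the paper uses (in its global form) to pass from $\Inv(\varphi)$ to $\Com(\varphi)$. So your outline can be completed, but to do so you end up needing the same structural input the paper exploits; the paper's packaging via commuting matrices simply makes the uniformity over all primes automatic.
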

Moreover, for every $m\geq 1$ we give an example to show that every $1 \leq k \leq m$ is possible. This result can also be interpreted as a specialization of the general bounds for finitely generated subgroups as described above. Indeed, the group $G$ is a finite extension of an abelian subgroup $K$ of rank $m\geq 1$ and hence the previous results asserted that $\RF_G$ was bounded below and above by $\log$ and $\log^n$ respectively, with $n = [G:K]$. However, now we know it equals $\log^k$ for some $1\leq k \leq \min\{m,n\}$, with various examples where the general bounds are not optimal.

The outline of this paper is as follows. In section \ref{sec_resid_fin_growth}, we will introduce the notion of residual finiteness growth with respect to families of normal subgroups, as will be crucial later. Next, in section \ref{sec_virt_ab}, we will define virtually abelian groups, and we will make some preliminary considerations concerning their residual finiteness growth. Following this initial discussion, section \ref{sec_upper} and \ref{sec_lower} will focus on the upper and the lower bound of the main result in theorem \ref{thm_main}. In order to do so, section \ref{sec_upper} will give more background about linear representations and section \ref{sec_lower} about matrices commuting with such a representation. In the last subsection of section \ref{sec_lower}, the main result will then be proven. Lastly, we will conclude this article with some applications and open questions in sections \ref{sec_applic} and \ref{sec_open_questions}.
	
	\section{Residual finiteness growth} \label{sec_resid_fin_growth}
	
	In this subsection, we introduce the concept of residual finiteness growth and we prove some of its properties. However, as this will be important for future results, we will work in the more general context of an arbitrary family $P$ of normal subgroup in $G$, in a similar flavor as \cite{bou2015residual}. 
	
	Every group under consideration is assumed to be finitely generated and residually finite. For these groups, we have a word norm $\norm{\cdot}_G$ on $G$, which measures the number of generators needed to write an element of $G$. As the only information we need from our metric are the balls of a certain radius, we will include this information in our definitions.
		\begin{df}
		For a group $G$ with finite generating set $S$, we define the ball $B_G(r)$ of radius $r > 0$ as 
		\begin{align*}B_G(r) &= \left\{g\in G \mid \norm{g}_G \leq r	\right\} \\ &= \left\{s_1^{\pm1} \ldots s_k^{\pm1} \mid s_i \in S, k \leq r	\right\} \end{align*}
	\end{df}
\noindent As we will introduce in Definition \ref{df_resid_fin}, we will define residual finiteness growth in such a way that it is independent of the choice of word metric. However, for the time being, we keep the balls $B_G$ in our definitions.
	
	\begin{df}
		We write $\nu(G) \subset \mathcal{P}(G)$ for the set of all finite index, normal subgroups of $G$. 
	\end{df}
\noindent Note that a group $G$ is residually finite if and only if $\displaystyle \bigcap_{N\in \nu(G)} N = \left\{e_G\right\}$. However, for the remainder it will be important to build the theory for more general subsets $P \subset \nu(G)$.
	\begin{df}
		Let $P$ be a subset of $\nu(G)$ for which $\displaystyle \bigcap_{N\in P}N = \left\{e_G\right\}$.
		Define the divisibility function $D_{G,P}: G \to \N \cup \left\{\infty
\right\}$ with respect to $G$ and $P$ as 
		\begin{equation*}
			D_{G,P}(g) =  \begin{cases} \min\left\{[G:N]\mid g\notin N, N \in P

\right\} & \text{for } g \neq e_G,\\ \infty & \text{for } g = e_G. \end{cases}
		\end{equation*}
	and the residual finiteness growth $\RF_{G,P,B_G}: \mathbb{R}_{\geq 1} \to \mathbb{N}$ with respect to $G$, $P$ and $B_G$ as
		\begin{equation*}
			\RF_{G,P,B_G}(r)=\max\left\{D_{G,P}(g)\mid e_G \neq g \in B_G(r) \right\}.
		\end{equation*}
	\end{df}
By definition we let $D_{G,P}(e_G) = \infty$, as this will be convenient for writing certain upper bounds as in Lemma \ref{lem_direct_sum}. From now on, we will always assume that our subsets $P \subset \nu(G)$ satisfy $\displaystyle \bigcap_{N\in P}N = \left\{e_G\right\}$. In particular, the set $P$ is always non-empty.

We will study the function $\RF_{G,P,B_G}$ up to the following preorder and corresponding equivalence relation:
	\begin{df}
		Let $f,g: \mathbb{R}_{\geq 1} \to \mathbb{R}_{\geq 1}$ be increasing functions. We define the following relations:
		\begin{align*}f &\preceq g \Leftrightarrow \exists C >0: \forall r \geq \max\{1,1/C\}: f(r) \leq Cg(Cr)\\
		f&\approx g \Leftrightarrow f\preceq g \text{ and } g \preceq f\end{align*}
	\end{df}
	We will give some lemmas concerning the behavior of $RF_{G,P, B_G}$ and $D_{G,P}$ with respect to changing $G$, $P$ or the metric balls $B_G$. It should be noted that some of the results in this section still hold in a more general setting, for example when allowing for normal subgroups of infinite index in $\nu(G)$ or when relaxing the condition that $\displaystyle \cap_{N\in P}N = \left\{e_G \right\}$. 
	
We introduce the following operations on subsets $P \subset \nu(G)$.
	\begin{df}
		\begin{itemize}
		\item Let $K\leq G$ and $P \subset \nu(G)$. We define the intersection as
		$$P\cap K = \left\{N\cap K \mid N \in P \right\} \subset \nu(K) .$$
	Note that, if $K$ is a normal subgroup of $G$, every $N \cap K$ is normal in $G$ as well, and thus we can also consider $P \cap K$ as a subset of $\nu(G)$. It will be clear from the context which convention we use. 	
	\item Let $P_1\subset \nu(G_1)$ and $P_2 \subset \nu(G_2)$. The direct sum is defined as
		$$P_1\oplus P_2 = \left\{N_1\oplus N_2\mid N_1\subset P_1, N_2\subset P_2 \right\} \subset \nu(G_1\oplus G_2).$$
		\item Let $\psi: G_1 \to G_2$ be a  homomorphism and $P\subset \nu(G_2)$. We define the inverse image of $P$ as
		$$\psi^{-1}(P) = \left\{\psi^{-1}(N)\mid N\subset P \right\} \subset \nu(G_1).$$
		Note that the inverse image of a normal subgroup of finite index is again normal of finite index.
		\end{itemize}
	\end{df}
\noindent It should be noted that the first two constructions above do not change the assumption on  $\displaystyle \bigcap_{N\in P}N = \left\{e_G\right\}$, as can be checked by the reader. For example, in the first case, we have that $$\displaystyle \bigcap_{N\in P \cap K}N =  \left(\bigcap_{N\in P}N \right) \cap K=  \left\{e_G\right\} \cap K = \left\{e_G\right\}.$$ However, the last construction does not preserve this property, as the kernel of $\psi$ always lies in $\psi^{-1}(N)$. We will only use the latter in Lemma \ref{lem_surjective} for the divisibility function.

	\begin{lemma} \label{lem_change_P}
		Let $G$ be a group and $P, \bar{P}$ be subsets of $\nu(G)$. If there exists a constant $C>0$ such that for all $N\in P$, there exists $\bar{N} \in \bar{P}$ with $\bar{N} \subset N$ and $[G:\bar{N}] \leq C[G:N]$, then $\RF_{G, \bar{P}, B_G} \preceq \RF_{G,P,B_G}$. 
	\end{lemma}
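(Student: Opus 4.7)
The plan is to prove the stronger pointwise inequality $D_{G,\bar{P}}(g) \leq C \cdot D_{G,P}(g)$ for every $g \neq e_G$, and then lift it to the residual finiteness growth via the $\max$ in the definition.

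First I would fix a non-trivial $g \in G$ and pick any $N \in P$ with $g \notin N$. By hypothesis, there exists $\bar{N} \in \bar{P}$ with $\bar{N} \subset N$ and $[G:\bar{N}] \leq C[G:N]$. Since $\bar{N} \subset N$, the fact that $g \notin N$ implies $g \notin \bar{N}$, and therefore $\bar{N}$ is an admissible candidate in the definition of $D_{G,\bar{P}}(g)$. This yields $D_{G,\bar{P}}(g) \leq [G:\bar{N}] \leq C[G:N]$. Taking the minimum over all $N \in P$ avoiding $g$, one obtains $D_{G,\bar{P}}(g) \leq C \cdot D_{G,P}(g)$ (and the inequality remains trivially valid if $g = e_G$, where both sides are infinite).

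From this, the inequality for $\RF$ follows by taking the maximum over the finite set $\{g \in B_G(r) : g \neq e_G\}$:
\begin{equation*}
\RF_{G,\bar{P},B_G}(r) = \max_{e_G \neq g \in B_G(r)} D_{G,\bar{P}}(g) \leq C \cdot \max_{e_G \neq g \in B_G(r)} D_{G,P}(g) = C \cdot \RF_{G,P,B_G}(r).
\end{equation*}
Since $\RF_{G,P,B_G}$ is non-decreasing, one also has $\RF_{G,P,B_G}(r) \leq \RF_{G,P,B_G}(Cr)$ whenever $C \geq 1$ (and the case $C < 1$ is handled by enlarging $C$ at the start, which does not weaken the hypothesis), so the constant $C$ witnesses $\RF_{G,\bar{P},B_G} \preceq \RF_{G,P,B_G}$ in the sense of the definition.

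There is no substantial obstacle here: the lemma is essentially a bookkeeping statement saying that refining $P$ into a larger family $\bar{P}$ with controlled index blowup can only increase the divisibility function by a bounded factor. The only point to be careful about is the trivial case $g = e_G$, which is painlessly absorbed into the convention $D_{G,P}(e_G) = \infty$.
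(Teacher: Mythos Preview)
Your proof is correct and follows essentially the same approach as the paper: prove the pointwise bound $D_{G,\bar{P}}(g) \leq C \cdot D_{G,P}(g)$ for $g \neq e_G$ by passing from an $N \in P$ avoiding $g$ to a subordinate $\bar{N} \in \bar{P}$, then take the maximum over $B_G(r)$. Your extra remarks about the case $g = e_G$ and the handling of the constant in the $\preceq$ relation are a bit more explicit than the paper's version, but the argument is the same.
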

	\begin{proof}
		Let $e_G \neq g \in B_G(r)$ and take $N\in P$ such that $g\notin N$ and $[G:N] = D_{G,P}(g)$. Take $\bar{N} \subset N$ as in the condition of the lemma, then we know that $g\notin \bar{N}$ and hence $$D_{G, \bar{P}}(g) \leq [G: \bar{N}] \leq C[G:N] = C\cdot D_{G,P}(g) \leq C\cdot\RF_{G,P,B_G}(r).$$ Taking the maximum over all non-trivial $g$ in $B_G(r)$ gives the result. 
	\end{proof}

A particular case of this lemma will be used several times in this paper:

\begin{ex}
	\label{ex:sub}
For any group $G$ and $P \subset \bar{P} \subset \nu(G)$ with $\displaystyle \bigcap_{N\in P}N = \left\{e_G\right\}$, the conditions of Lemma \ref{lem_change_P} are automatically satisfied by taking  $N = \bar{N}$ and $C=1$. We conclude that $\RF_{G,\bar{P},B_G} \preceq \RF_{G,P,B_G}$ in this case.
\end{ex}

	Note that if $K$ is a finitely generated subgroup of $G$, then there always exists a constant $D>0$ such that $B_K(D\cdot r) \subset B_G(r)$. If $K$ has finite index in $G$, then $K$ is automatically finitely generated and moreover there exists a constant $C \geq 0$ such that $B_G(r)\cap K \subset B_K(C\cdot r)$, see \cite[corollary 5.4.5]{loh2017geometric}.
	\begin{lemma} \label{lem_reduce_group}
		Let $P\subset \nu(G)$ and suppose $K$ is a finite index subgroup of $G$, then $\RF_{K,P\cap K, B_K} \preceq \RF_{G,P,B_G}$. If moreover every $N\in P$ satisfies $N \subset K$ then $\RF_{K,P\cap K, B_K} \approx \RF_{G,P,B_G}$.
	\end{lemma}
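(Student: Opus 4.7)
The plan is to prove the two claims separately and combine them, using the map $N \mapsto N \cap K$ together with the two ball comparisons recalled immediately before the lemma: every generator of $K$ can be written as a short $G$-word, giving $B_K(r) \subset B_G(Dr)$ for some $D \geq 1$, and when $[G:K] < \infty$ every element of $B_G(r) \cap K$ lies in $B_K(Cr)$ for some $C \geq 1$.

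For the first inequality $\RF_{K, P \cap K, B_K} \preceq \RF_{G, P, B_G}$, I would pick an arbitrary non-trivial $k \in B_K(r)$, observe that $k \in B_G(Dr)$, and select $N \in P$ realising $D_{G,P}(k)$, so $k \notin N$ and $[G:N] \leq \RF_{G,P,B_G}(Dr)$. Then $N \cap K \in P \cap K$ still does not contain $k$, and the natural injection $K/(N \cap K) \hookrightarrow G/N$ yields $[K : N \cap K] \leq [G:N]$. Taking the maximum over non-trivial $k \in B_K(r)$ gives the desired inequality, with the factor $D$ absorbed into the definition of $\preceq$.

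For the reverse inequality under the extra hypothesis that every $N \in P$ lies in $K$, I would take a non-trivial $g \in B_G(r)$ and split on whether $g \in K$. If $g \in K$, then $g \in B_K(Cr)$ by the second ball estimate, so there is $\bar N = N \cap K \in P \cap K$ with $g \notin \bar N$ and $[K : \bar N] \leq \RF_{K, P \cap K, B_K}(Cr)$. The hypothesis $N \subset K$ forces $N \cap K = N$, so $g \notin N$ in $G$ as well and $[G:N] = [G:K] \cdot [K : \bar N]$. If $g \notin K$, then for any fixed $N_0 \in P$ the inclusion $N_0 \subset K$ gives $g \notin N_0$ for free, so $D_{G,P}(g) \leq [G:N_0]$, a constant. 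Taking the maximum across both cases yields $\RF_{G,P,B_G}(r) \leq C' \cdot \RF_{K, P \cap K, B_K}(Cr)$ with $C' = \max([G:K], [G:N_0])$, which is the desired $\preceq$; together with the first part this gives $\approx$.

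The apparent obstacle is the case $g \notin K$ in the second inequality: such an element is not seen by $B_K$ at all and one might worry that it cannot be separated by subgroups in $P \cap K$. The extra hypothesis $N \subset K$ turns this worry into a triviality, since $g \notin K$ then forces $g \notin N$ for every $N \in P$ simultaneously; this is precisely the role of the hypothesis, and without it the $\approx$ statement would genuinely fail because nothing would prevent $\RF_{G,P,B_G}$ from exceeding $\RF_{K, P \cap K, B_K}$ due to elements outside $K$. Beyond this, the only remaining work is to keep track of the multiplicative constants $C$, $D$, $[G:K]$ and $[G:N_0]$, all of which are absorbed into the $\preceq$ and $\approx$ relations.
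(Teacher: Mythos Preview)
Your proposal is correct and follows essentially the same approach as the paper: both use the ball comparisons $B_K(r)\subset B_G(Dr)$ and $B_G(r)\cap K\subset B_K(Cr)$, the second isomorphism bound $[K:N\cap K]\leq [G:N]$ for the first inequality, and the case split on $g\in K$ versus $g\notin K$ (handling the latter via a fixed $N_0\in P\subset K$) for the reverse inequality. The only cosmetic difference is that the paper writes the first ball inclusion as $B_K(Dr)\subset B_G(r)$ rather than $B_K(r)\subset B_G(Dr)$, which is merely a reparametrisation.
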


The last condition means that we can consider $P \subset \nu(K)$ as well, as every element lies entirely in $K$. Of course, not every element of $\nu(K)$ lies in $\nu(G)$. 

	\begin{proof}
Take $D>0$ such that $B_K(D\cdot r) \subset B_G(r)$ and $e_K \neq k \in B_K(D\cdot r)$. Since $e_G \neq k \in B_G(r)$ we find $N\in P$ such that $k\notin N$ and $[G:N] = D_{G,P}(k)$. Hence we have 
		$$D_{K,P\cap K}(k) \leq [K:K\cap N] \leq [G:N] = D_{G,P}(k) \leq \RF_{G,P,B_G}(r),$$
		using that $[G:N] = [G:KN]\cdot [K:K\cap N]$. Taking the maximum over all non-trivial $k$ in $B_K(D\cdot r)$ gives the inequality $\RF_{K, P\cap K, B_K} \preceq \RF_{G,P,B_G}$.
		
		For the second statement, take $C$ such that $B_G(r)\cap K \subset B_K(C\cdot r)$. Fix a normal subgroup $N'$ of finite index in $G$, lying in $K$, which exists by our assumptions on $P$. Now take $e_G \neq g \in B_G(r)$. If $g \notin K$, then $D_{G,P}(g) \leq [G:N']$. If $g \in K$ and hence $k\in B_K(C\cdot r)$, we can take $N \in P$ such that $g \notin N$ and $[K:N] = D_{K,P}(g)$. We obtain
		\begin{align*}
			D_{G,P}(g) & \leq [G:N] = [G:K]\cdot [K:N] \\ & = [G:K]\cdot D_{K,P}(g) \leq [G:K]\cdot \RF_{K,P,B_K}(C\cdot r).
		\end{align*}
		In total, we argued that $D_{G,P}(g) \leq \max\left\{[G:N'], [G:K]\cdot\RF_{K,P,B_K}(C\cdot r) \right\}$. As $\RF_{K,P,B_K}(C\cdot r)$ is at least a bounded function, taking the maximum over all $e_G \neq g\in B_G(r)$ leads to the desired result.
	\end{proof}

As a consequence, we see that the finite generating set does not play a role when studying the residual finiteness growth. 
	\begin{cor} \label{cor_independent}
		Let $B_G$ and $B_{G}'$ be two metric balls associated to different word metrics on $G$. Then $\RF_{G,P,B_G} \approx \RF_{G,P,B_{G}'}$.
	\end{cor}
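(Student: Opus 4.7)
The plan is to reduce this to the standard fact that any two word metrics on a finitely generated group are bi-Lipschitz equivalent. Concretely, if $S$ and $S'$ are the two finite generating sets giving rise to $B_G$ and $B_G'$ respectively, then each element of $S$ can be expressed as a word in $S' \cup (S')^{-1}$ of length at most some constant $C_1$, and symmetrically each element of $S'$ is a word of length at most $C_2$ in $S \cup S^{-1}$. Setting $C = \max\{C_1, C_2\}$, an element of $B_G(r)$ can be rewritten as a product of at most $Cr$ elements of $S' \cup (S')^{-1}$, and vice versa. This gives the inclusions $B_G(r) \subset B_G'(Cr)$ and $B_G'(r) \subset B_G(Cr)$ for every $r \geq 1$.

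With this in hand, I would argue directly from the definition of $\RF$. Pick any non-trivial $g \in B_G(r)$. By the inclusion above, $g \in B_G'(Cr)$, hence $D_{G,P}(g) \leq \RF_{G,P,B_G'}(Cr)$. Taking the maximum over all such $g$ yields
\begin{equation*}
\RF_{G,P,B_G}(r) \leq \RF_{G,P,B_G'}(Cr) \leq C \cdot \RF_{G,P,B_G'}(Cr),
\end{equation*}
which is precisely the definition of $\RF_{G,P,B_G} \preceq \RF_{G,P,B_G'}$. By the symmetric argument with the roles of $S$ and $S'$ swapped, we also get $\RF_{G,P,B_G'} \preceq \RF_{G,P,B_G}$, and the two combine to the desired $\approx$.

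Alternatively, one may observe that the corollary is a direct consequence of Lemma \ref{lem_reduce_group} applied with $K = G$: the group $G$ is trivially a finite-index subgroup of itself, and every $N \in P$ is automatically contained in $K = G$, so the second half of that lemma immediately yields $\RF_{G,P,B_G} \approx \RF_{G,P,B_G'}$, where the second balls are simply reinterpreted as the balls of the ``subgroup'' $K = G$ with a possibly different generating set. I do not expect any real obstacle; the content is entirely book-keeping based on the bi-Lipschitz equivalence of word metrics, and the flexibility built into the $\preceq$-relation (which allows both a multiplicative constant out front and a rescaling of the argument) absorbs the constant $C$ without difficulty.
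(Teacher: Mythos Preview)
Your proposal is correct, and the paper's proof is precisely your second alternative: it applies Lemma \ref{lem_reduce_group} with $K = G$ and $B_K = B_G'$, nothing more. Your direct bi-Lipschitz argument is of course also valid and is essentially what underlies that lemma in this degenerate case.
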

\begin{proof}
	Apply the previous lemma to the case where $K = G$ and $B_K = B_{G}'$.
\end{proof}
This way we can define residual finiteness growth on $G$ independent of the word metric. For the sake of this paper, the metric will play no active role. Therefore, we will no longer specify the word metrics in our results.
\begin{df} \label{df_resid_fin}
	The residual finiteness growth $\RF_G$ of a group $G$ is defined as the equivalence class of $\RF_{G, \nu(G),B_G}$. We will say that $\RF_G$ equals $f$ if $\RF_{G, \nu(G),B_G} \approx f$ for some (and hence every) finite generating set $S$.
\end{df}

The next lemma forms a combination of Lemmata \ref{lem_change_P} and \ref{lem_reduce_group}.
\begin{lemma} \label{lem_reduc_two}
	Suppose $K$ is a normal, finite index subgroup of $G$. Let $P\subset \nu(G)$ with $G\in P$ and $\bar{P} \subset P\cap K \subset \nu(K)$.	If there exists a constant $C>0$ such that for all $N\in P$ there exists $\bar{N} \in \bar{P}$ such that $\bar{N}\subset N$ and $[K:\bar{N}] \leq C[G:N]$, then $\RF_{G,P,B_G} \approx \RF_{K, \bar{P},B_K}$.
\end{lemma}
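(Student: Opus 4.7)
The plan is to establish both directions of the equivalence separately, combining the arguments of Lemmata \ref{lem_change_P} and \ref{lem_reduce_group}.

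For the inequality $\RF_{K,\bar{P},B_K} \preceq \RF_{G,P,B_G}$, I would argue as in Lemma \ref{lem_change_P}, transported through the comparison of word balls between $K$ and $G$. Fix $D'>0$ with $B_K(r) \subset B_G(D' r)$ and let $e_K \neq k \in B_K(r)$. The standing separability of $P$ provides $N \in P$ with $k \notin N$ realising $D_{G,P}(k)$, and the hypothesis yields $\bar{N} \in \bar{P}$ with $\bar{N} \subset N$ and $[K:\bar{N}] \leq C[G:N]$; since $k \notin N$ forces $k \notin \bar{N}$, one obtains
\[
D_{K,\bar{P}}(k) \leq [K:\bar{N}] \leq C \cdot D_{G,P}(k) \leq C \cdot \RF_{G,P,B_G}(D' r),
\]
and maximising over $k$ yields the inequality.

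For the reverse inequality $\RF_{G,P,B_G} \preceq \RF_{K,\bar{P},B_K}$, fix $C'>0$ with $B_G(r) \cap K \subset B_K(C' r)$ and split $e_G \neq g \in B_G(r)$ according to whether $g \in K$. If $g \in K$, pick $\bar{N} \in \bar{P}$ realising $D_{K,\bar{P}}(g)$ and use $\bar{P} \subset P \cap K$ to write $\bar{N} = N \cap K$ with $N \in P$; then $g \in K \setminus \bar{N}$ forces $g \notin N$, giving
\[
D_{G,P}(g) \leq [G:N] \leq [G:K] \cdot [K:\bar{N}] \leq [G:K] \cdot \RF_{K,\bar{P},B_K}(C' r).
\]
If $g \notin K$, one exploits the finiteness of $G/K$: writing $d$ for the order of $gK$ in $G/K$, one has $g^d \in K$ and $|g^d|_G \leq [G:K] \cdot r$, while $g \in N$ implies $g^d \in N$, so $D_{G,P}(g) \leq D_{G,P}(g^d)$. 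When $g^d \neq e_G$ the previous case applies to $g^d$; the residual case $g^d = e_G$ is the main obstacle and is treated via the hypothesis applied to $N = G \in P$, which produces $\bar{N}_0 \in \bar{P}$ of index at most $C$ in $K$ whose preimage in $P$ supplies a uniform bounded-index separating subgroup for these exceptional torsion elements. This is precisely why the condition $G \in P$ is included: it activates the structural hypothesis at the top of the filter and provides uniform control on elements of $G \setminus K$ with $g^d = e_G$.
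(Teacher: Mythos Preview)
Your approach differs from the paper's, which factors through the intermediate family $P\cap K$ (viewed inside $\nu(G)$) via the three-step chain
\[
\RF_{G,P,B_G}\ \approx\ \RF_{G,P\cap K,B_G}\ \approx\ \RF_{K,P\cap K,B_K}\ \approx\ \RF_{K,\bar P,B_K},
\]
invoking Lemmata~\ref{lem_change_P} and~\ref{lem_reduce_group} at each stage. Your direct route via powers $g^{d}$ is an interesting alternative, and the first inequality together with the subcases $g\in K$ and $g\notin K$, $g^{d}\neq e_G$ are handled correctly.

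There is, however, a genuine gap in the torsion case $g\notin K$, $g^{d}=e_G$. Applying the hypothesis to $N=G$ does produce some $\bar N_0\in\bar P$ with $[K:\bar N_0]\le C$, but the ``preimage in $P$'' you invoke need not separate $g$. Writing $\bar N_0=N_0\cap K$ with $N_0\in P$ gives only $[G:N_0]=[G:KN_0]\,[K:\bar N_0]\le C[G:K]$; it says nothing about whether $g\notin N_0$. Nothing prevents $N_0=G$ (so $\bar N_0=K$, index $1\le C$), in which case $g\in N_0$ trivially. Concretely, take $G=\Z/2\times\Z$, $K=\{0\}\times\Z$, $P=\{G\}\cup\{\{0\}\times 2^{n}\Z:n\ge1\}$ and $\bar P=P\cap K$: the hypothesis holds with $C=1$, every element of $G\setminus K$ is torsion with $g^{2}=e$, and for $g=(1,0)$ the only $N_0\in P$ with $N_0\cap K=K$ is $N_0=G$, which does not separate $g$. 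So the mechanism you describe does not produce the required bounded-index witness in $P$.

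The paper's chain sidesteps this issue entirely: the condition $G\in P$ forces $K=G\cap K\in P\cap K$, and since every member of $P\cap K$ lies inside $K$, each $g\notin K$ is already separated by $K$ itself \emph{within the family $P\cap K$}, uniformly with index $[G:K]$. The delicate comparison between $D_{G,P}$ and $D_{G,P\cap K}$ for such $g$ is then absorbed into the first equivalence of the chain via Lemma~\ref{lem_change_P}, rather than argued elementwise with a power trick.
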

\begin{proof}
	We will show the following equivalences:
	\begin{equation*}
		\begin{split}
			\RF_{G,P,B_G} & \approx \RF_{G,P\cap K,B_G} \\
			& \approx \RF_{K,P\cap K,B_K} \\
			& \approx \RF_{K, \bar{P},B_K}.
		\end{split}
	\end{equation*}
For the first equivalence, we first show that $\RF_{G,P,B_G} \preceq \RF_{G,P\cap K,B_G}$. Take $g\in B_G(r)$ arbitrary, then either $g\notin K$ or $g\in K$. If $g\notin K$, $D_{G,P}(g) \leq [G:G\cap K] = [G:K]$. In the other case, take $g\notin N\cap K$ that realizes $D_{G,P\cap K}(g)$, i.e. $[G:N\cap K] = D_{G,P\cap K}(g)$. Then it also holds that $g\notin N$. Hence, we see that
$$D_{G,P}(g) \leq [G:N] \leq [G:N\cap K] = D_{G,P\cap K}(g) \leq \RF_{G,P\cap K,B_G}(r).$$
In conclusion, $D_{G,P}(g) \leq \max\left\{[G:K], \RF_{G,P\cap K,B_G} \right\}$. Taking the maximum over all $g\in B_G(r)$ gives the result.

Conversely, if $N\lhd G$, we have $[G:N] = [G:NK]\cdot [K:N\cap K]$, so $[G:N] \geq [K:N\cap K]$. As a consequence,
$$[G:N\cap K] = [G:K]\cdot [K:N\cap K] \leq [G:K]\cdot [G:N].$$
By lemma \ref{lem_change_P} with $C = [G:K]$, we conclude the first equality holds.

The second equivalence is a direct application of Lemma \ref{lem_reduce_group}.

For the third equivalence, we note that by Example \ref{ex:sub} it holds that $\RF_{K,P\cap K,B_K} \preceq \RF_{K, \bar{P},B_K}$. For the converse, we wish to apply Lemma \ref{lem_change_P}. To do so, we claim that for every $N\cap K \in P\cap K$, there exists $\bar{N} \in \overline{P} $ with $\bar{N}\subset N\cap K$ and $[G:\bar{N}] \leq C[G:K] \cdot [K:N\cap K]$. Here, the constant in the lemma's statement is in fact $C[G:K]$. Take $N\cap K \in N\cap P$. By assumption, there exists $\bar{N} \in \bar{P}$ such that $\bar{N} \subset N$ and $[K: \bar{N}] \leq C[G:N]$. This implies that $\bar{N} \subset N\cap K \in P\cap K$ as $\bar{P} \subset P\cap K$ and 
$$[K: \bar{N}] \leq C[G:KN]\cdot [K:N\cap K] \leq C[G:K] \cdot [K:N\cap K].$$
This shows the claim and therefore ends the proof.
\end{proof}
We now look at how residual finiteness behaves with respect to direct sums.
	\begin{lemma} \label{lem_direct_sum}
	Let $G = G_1\oplus G_2$ be a direct sum of two groups $G_i$ and $P \subset \nu(G)$. If we write $P_i = P \cap G_i$ and assume that $\bar{P} = P_1\oplus P_2 \subset P$, $G_1 \in P_1$ and $G_2 \in P_2$, then
		\begin{equation} \label{eq_minimum}
		D_{G,P}(g) \leq \min\left\{D_{G_1,P_1}(\pi_1(g)), D_{G_2,P_2}(\pi_2(g)) \right\},
		\end{equation}
		and 
		$$\RF_{G,P,B_G} \approx \max\left\{\RF_{G_1,P_1,B_1}, \RF_{G_2,P_2,B_2} \right\}.$$
		The maps $\pi_1$ and $\pi_2$ are the natural projections from $G$ onto $G_1$ and $G_2$.
	\end{lemma}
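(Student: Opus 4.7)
First I would establish inequality (\ref{eq_minimum}) directly from the hypothesis $\bar{P} \subset P$, and then deduce the asymptotic equality for $\RF$ from it via two matching one-sided bounds.

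For (\ref{eq_minimum}), fix a non-trivial $g = (g_1,g_2) \in G$. If $g_1 \neq e_{G_1}$, pick $N_1 \in P_1$ realising $D_{G_1,P_1}(g_1)$, so that $g_1 \notin N_1$ and $[G_1:N_1] = D_{G_1,P_1}(g_1)$. Since $G_2 \in P_2$, the subgroup $N_1 \oplus G_2$ lies in $\bar{P} \subset P$, and clearly $g \notin N_1 \oplus G_2$, hence
\[D_{G,P}(g) \leq [G : N_1 \oplus G_2] = [G_1 : N_1] = D_{G_1,P_1}(\pi_1(g)).\]
Symmetrically, if $g_2 \neq e_{G_2}$ then $D_{G,P}(g) \leq D_{G_2,P_2}(\pi_2(g))$. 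Since at least one projection is non-trivial and $D_{G_i,P_i}(e_{G_i}) = \infty$ by convention, combining the two bounds yields (\ref{eq_minimum}).

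The upper bound $\RF_{G,P,B_G} \preceq \max\{\RF_{G_1,P_1,B_1}, \RF_{G_2,P_2,B_2}\}$ then follows by choosing compatible generating sets $S = S_1 \cup S_2$ for $G$, so that $\pi_i(B_G(r)) \subset B_i(r)$, and applying (\ref{eq_minimum}) coordinate-wise: any non-trivial $g \in B_G(r)$ has at least one non-trivial projection $\pi_i(g) \in B_i(r)$, whose divisibility is at most $\RF_{G_i,P_i,B_i}(r)$. For the matching inequality, take a non-trivial $g_1 \in B_1(r)$; then $g_1 \in B_G(r)$ as well. Let $N \in P$ realise $D_{G,P}(g_1)$, and set $N_1 = N \cap G_1 \in P_1$. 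Since $g_1 \notin N$, also $g_1 \notin N_1$, and the natural injection $G_1/N_1 \hookrightarrow G/N$ gives $[G_1:N_1] \leq [G:N]$, whence $D_{G_1,P_1}(g_1) \leq D_{G,P}(g_1) \leq \RF_{G,P,B_G}(r)$. Taking the maximum over $g_1$, and treating $G_2$ symmetrically, produces the reverse bound.

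I do not anticipate a substantial obstacle. The hypothesis $\bar{P} \subset P$ provides exactly the product-type normal subgroups needed to isolate one coordinate at a time for the first direction, while in the second direction the intersection $N \cap G_i$ automatically lies in $P_i = P \cap G_i$ and yields the required index estimate. Independence from the particular choice of word metric is handled by Corollary \ref{cor_independent}.
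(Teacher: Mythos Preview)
Your proposal is correct and follows essentially the same approach as the paper: use subgroups of the form $N_1\oplus G_2$ (resp.\ $G_1\oplus N_2$) in $\bar P\subset P$ to obtain (\ref{eq_minimum}), deduce the upper bound on $\RF_{G,P,B_G}$ via the product generating set $S=S_1\cup S_2$, and get the reverse inequality by intersecting an optimal $N\in P$ with $G_i$ and using $[G_i:N\cap G_i]\le [G:N]$. Your derivation of (\ref{eq_minimum}) is in fact slightly more direct than the paper's, which passes through the intermediate quantity $D_{G,\bar P}$, but the underlying idea is identical.
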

	\begin{proof}
		Suppose $G = G_1\oplus G_2$ and $\bar{P} = P_1\oplus P_2 \subset P$. We clearly have $D_{G,P}(g) \leq D_{G, \bar{P}}(g)$ by Example \ref{ex:sub}. If $N = N_1\oplus N_2 \in \bar{P}$ is the normal subgroup such that $g \notin N$ and $[G:N] = D_{G, \bar{P}}(g)$, then either $\pi_1(g) \notin N_1$ or $\pi_2(g) \notin N_2$. Suppose that $\pi_1(g) \notin N_1$, then $g\notin N_1\oplus G_2 \in \bar{P}$ and thus $D_{G, \bar{P}}(g) = [G_1:N_1]$. Choosing $N_1$ to have minimal index in $G_1$ with $\pi_1(g) \notin N_1$ shows that $D_{G, \bar{P}}(g) \leq D_{G_1,P_1}(\pi_1(g))$. Analogously if $\pi_2(g) \notin N_2$, we get $D_{G, \bar{P}}(g) \leq D_{G_2,P_2}(\pi_2(g))$. From this, we conclude that 
		$$D_{G,P}(g) \leq \min\left\{D_{G_1,P_1}(\pi_1(g)), D_{G_2,P_2}(\pi_2(g)) \right\}.$$
		
		Now, we need to show that 
		$$\RF_{G,P,B_G} \approx \max\left\{\RF_{G_1,P_1,B_1}, \RF_{G_2,P_2,B_2} \right\}.$$	Since the definition of $\RF$ is up to equivalence independent of the choice of metric, we take $B_G(r)$ as $$B_G(r) = \left\{g\mid \pi_1(g)\in B_{G_1}(r_1), \pi_2(g)\in B_{G_2}(r_2), r_1+r_2 \leq r \right\}.$$ These are exactly the metric balls arising from adjoining generators $S_1$ of $G_1$ with $S_2$ of $G_2$, more concretely with generating set $S = \{(s,e_{G_2}) \mid s \in S_1\} \cup \{(e_{G_1},s) \mid s \in S_2\}$ for $G_1 \oplus G_2$.
		
		From one side, we have for all $e_G\neq g\in B_G(r)$ that either $\pi_1(g)$ or $\pi_2(g)$ is non-trivial. By symmetry, it suffices to consider the case when $\pi_1(g) \neq e_{G_1}$. Since $\pi_1(g) \in B_{G_1}(r)$, we find that 
		\begin{align*}
			D_{G,P}(g) & \leq  \min\left\{D_{G_1,P_1}(\pi_1(g)), D_{G_2,P_2}(\pi_2(g))\right\}\\ & \leq D_{G_1,P_1}(\pi_1(g)) \\ & \leq \RF_{G_1,P_1,B_1}(r) \\& \leq  \max\left\{\RF_{G_1,P_1,B_1}(r), \RF_{G_2,P_2,B_2}(r)\right\}.	\end{align*}
since $ D_{G_1,P_1}(\pi_1(g)) < \infty$.	This shows the first inequality.

		From the other side, take $g\in B_1(r) \subset G_1$ such that $\RF_{G_1,P_1,B_1}(r) = D_{G_1,P_1}(g)$. Then for all $N\in P$ with $g\notin N\cap G_1 \in P_1$ we have 
		$$[G:N]\geq [G_1:N\cap G_1] \geq D_{G_1,P_1}(g) = \RF_{G_1,P_1,B_1}(r),$$
		or after taking the minimum over all such normal subgroups:
		$$D_{G,P}(g) \geq D_{G_1,P_1}(g) = \RF_{G_1,P_1,B_1}(r).$$
		Hence surely, $\RF_{G,P,B_G}(r) \geq  \RF_{G_1,P_1,B_1}(r)$. By symmetry, this argument also applies to $G_2$, therefore ending the proof.
	\end{proof}

	Note that the conditions $G_1 \in P_1$ and $G_2 \in P_2$ make sure that the inequality \eqref{eq_minimum} holds. If those conditions are removed but $P$ is non-empty, a multiplicative constant depending on $G$ and $P$ but not on $g$ should be added.

	\begin{lemma} \label{lem_surjective}
		If $\psi: G_1 \to G_2$ is a surjective homomorphism, $P\subset \nu(G_2)$ and $\psi^{-1}(P) \subset \bar{P}$, then $D_{G_1, \bar{P}}(g) \leq D_{G_2, P}(\psi(g))$.
	\end{lemma}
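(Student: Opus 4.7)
The plan is a direct pullback argument: given an optimal witness in $\nu(G_2)$ for the divisibility of $\psi(g)$, we pull it back via $\psi$ to obtain a candidate witness in $\bar{P}$ for the divisibility of $g$, and show that the index is preserved.

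First, I would dispose of the trivial case $\psi(g) = e_{G_2}$: then $D_{G_2,P}(\psi(g)) = \infty$ by definition, so the inequality holds vacuously. Assume therefore $\psi(g) \neq e_{G_2}$ and pick $N \in P$ realizing the minimum, i.e.\ $\psi(g) \notin N$ and $[G_2:N] = D_{G_2,P}(\psi(g))$. The natural candidate in $G_1$ is $\psi^{-1}(N)$, which lies in $\psi^{-1}(P)$ and hence in $\bar{P}$ by assumption; in particular it is normal of finite index in $G_1$, as already noted in the paper right after the definition of $\psi^{-1}(P)$.

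Next I would verify the two properties needed to use $\psi^{-1}(N)$ as a witness for $D_{G_1,\bar{P}}(g)$. The condition $g \notin \psi^{-1}(N)$ is immediate from $\psi(g) \notin N$. To compute the index, I would use that surjectivity of $\psi$ makes the induced map $G_1/\psi^{-1}(N) \to G_2/N$ a group isomorphism, giving $[G_1:\psi^{-1}(N)] = [G_2:N]$. Combining everything,
\[
D_{G_1,\bar{P}}(g) \;\leq\; [G_1 : \psi^{-1}(N)] \;=\; [G_2 : N] \;=\; D_{G_2,P}(\psi(g)).
\]

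There is no real obstacle here; the only point deserving attention is that surjectivity is genuinely used to ensure the index equality (without it one would only get $[G_1:\psi^{-1}(N)] \leq [G_2:N]$, which still suffices, but the cleaner statement uses equality). No other properties of $\bar{P}$ are needed beyond the inclusion $\psi^{-1}(P) \subset \bar{P}$, matching exactly the hypothesis of the lemma.
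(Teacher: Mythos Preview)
Your proof is correct and follows essentially the same approach as the paper: both handle the trivial case $D_{G_2,P}(\psi(g))=\infty$ first, then pull back an optimal witness $N\in P$ to $\psi^{-1}(N)\in\bar{P}$ and use surjectivity to identify the indices. The only cosmetic difference is that the paper inserts the intermediate inequality $D_{G_1,\bar{P}}(g)\leq D_{G_1,\psi^{-1}(P)}(g)$ before carrying out the pullback, whereas you combine the two steps.
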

	\begin{proof}
	Since $\psi^{-1}(P) \subset \bar{P}$, it is immediate that
		$$D_{G_1, \bar{P}}(g) \leq D_{G_1, \psi^{-1}(P)}(g).$$
		We will now show that
		$$D_{G_1, \psi^{-1}(P)}(g) \leq D_{G_2, P}(\psi(g)).$$ If $ D_{G_2, P}(\psi(g)) = \infty$, then there is nothing to prove, so we can assume that $ D_{G_2, P}(\psi(g)) < \infty$.
Choose $N \in P$ such that $\psi(g) \notin N$ and $[G_2:N] = D_{G_2, P}(\psi(g))$, then $g\notin \psi^{-1}(N)$ and so
		\begin{align*}
			D_{G_1, \psi^{-1}(P)}(g) & \leq [G_1:\psi^{-1}(N)] \\  & = [G_2:N] \\ &= D_{G_2, \psi(P)}(\psi(g)).
		\end{align*}	\end{proof}
	In particular, the previous lemma applies to the case where $\bar{P} = \nu(G_1)$ and $P = \nu(G_2)$.

	\section{Residual finiteness growth of virtually abelian groups} \label{sec_virt_ab}
	
	In this subsection, we will find a new expression for the residual finiteness growth of virtually abelian groups, using results of the previous sections for families $P$ of normal subgroups.
	
	Let us first start with a general group extension $G$, namely a group fitting in a short exact sequence
	$$\begin{tikzcd}
		1 \arrow[r] & K \arrow[r, "i", hook] & G \arrow[r, "\pi", two heads] & H \arrow[r] & 1. \label{eq_short_exact}
	\end{tikzcd}$$
If $s: H\to G$ is a set theoretic map with the property that $\pi\circ s = \Id$, called a section, then \begin{align*}\varphi: H &\to \Out(K)\\ h  & \mapsto (i^{-1}\circ C_{s(h)}\circ i)\Inn(K)
		\end{align*} is a well-defined homomorphism, independent of the chosen section. Here $C_g: K \to K$ for $g \in G$ stands for the automorphism of $K$ given by conjugation with $g$, so $C_g(k) = g k g^{-1}$. We write $\Inn(G) = \{C_g \mid g \in G\} \subset \Aut(G)$ for the normal subgroup which contains every conjugation automorphism, and $\Out(G) = \Aut(G) / \Inn(G)$ for the quotient group of outer automorphisms.
	
	A first result in this section is Theorem \ref{thm_finite_extension} below, which relates $\RF_G$ to $\RF_{K,P}$ for a certain family of subgroups $P$, namely the ones invariant under the morphism $\varphi: H \to \Out(K)$. We start with the following observation.
	
	\begin{lemma}
	Assume that $\psi_1$ and $\psi_2$ are two automorphisms of $K$ such that $\psi_1 \Inn(K)= \psi_2 \Inn(K)$, then $\psi_1(N) = \psi_2(N)$ for any normal subgroup $N$ of $K$.
	\end{lemma}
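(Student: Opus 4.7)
My plan is to unwind the definition of the coset equality $\psi_1 \Inn(K) = \psi_2 \Inn(K)$. This is equivalent to $\psi_2^{-1} \circ \psi_1 \in \Inn(K)$, so there exists some $k \in K$ with $\psi_2^{-1} \circ \psi_1 = C_k$, equivalently $\psi_1 = \psi_2 \circ C_k$.

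Next, I would apply both sides to the normal subgroup $N$. This gives $\psi_1(N) = \psi_2(C_k(N)) = \psi_2(kNk^{-1})$. The key (and only non-formal) step is then the observation that since $N \triangleleft K$, we have $kNk^{-1} = N$, so the inner automorphism $C_k$ stabilizes $N$ setwise. Substituting yields $\psi_1(N) = \psi_2(N)$, as desired.

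There is no real obstacle here: the lemma is essentially a direct consequence of the fact that inner automorphisms preserve normal subgroups, combined with unpacking what equality of cosets in $\Out(K)$ means. The purpose of the lemma is presumably to make sense of a well-defined action of $H$ (via $\varphi$) on the set of normal subgroups of $K$, which is why the authors isolate it before using $\varphi$-invariant families $P$ in the proof of Theorem \ref{thm_finite_extension}.
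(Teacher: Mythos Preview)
Your proof is correct and follows exactly the same approach as the paper: write $\psi_1 = \psi_2 \circ C_k$ for some $k \in K$, then use that $C_k(N) = N$ since $N$ is normal.
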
  
\begin{proof}
By assumption, there exists $k \in K$ such that $\psi_1 = \psi_2 \circ C_k$. For any normal subgroup $N \triangleleft K$ we have that $$\psi_1(N) = \psi_2(C_k(N)) = \psi_2(N).$$
\end{proof}

\noindent Hence, this lemma shows that we can define $\overline{\psi}(N)$ for any normal subgroup and any element $\overline{\psi} \in \Out(K)$, making sense of the following definition.
	\begin{df}
		Let $\varphi: H \to \Out(K)$ be a morphism, then we define $\Inv(\varphi) \subset \nu(K)$ as the set $$\Inv(\varphi) = \left\{N\in \nu(K)\mid \forall h \in H:  \varphi(h)(N) = N \right\}.$$
	\end{df}

The importance of this set lies in the following application of Lemma \ref{lem_reduc_two}.
	\begin{thm} \label{thm_finite_extension}
		Let $G$ be a residually finite, finitely generated group in a short exact sequence as in equation \eqref{eq_short_exact}.
		If $H$ is finite, then $\RF_{G, \nu(G), B_G}$ equals $\RF_{K, \Inv(\varphi), B_K}$.
	\end{thm}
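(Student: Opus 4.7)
The plan is to apply Lemma~\ref{lem_reduc_two} with $P = \nu(G)$ and $\bar P = \Inv(\varphi)$. Since $H$ is finite, $K$ is automatically a normal finite-index subgroup of $G$, and $G \in \nu(G)$ holds trivially, so the outer conditions of that lemma come for free. What remains to check is (i) that $\Inv(\varphi) \subset \nu(G) \cap K$ in the set-theoretic sense $P \cap K = \{N \cap K \mid N \in P\}$ introduced earlier, and (ii) that for every $N \in \nu(G)$ there exists some $\bar N \in \Inv(\varphi)$ contained in $N$ with $[K : \bar N] \leq C[G:N]$ for a constant $C$ independent of $N$.

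For (i), I would fix $M \in \Inv(\varphi)$ and use the section $s\colon H \to G$. By definition $M \triangleleft K$ is of finite index and $s(h) M s(h)^{-1} = M$ for every $h \in H$. Any $g \in G$ factors as $g = s(\pi(g))\,k$ with $k \in K$, so combining with normality of $M$ in $K$ gives $g M g^{-1} = s(\pi(g)) M s(\pi(g))^{-1} = M$. Hence $M \triangleleft G$ and $[G:M] = [G:K]\,[K:M] < \infty$, so $M \in \nu(G)$, and writing $M = M \cap K$ exhibits $M \in \nu(G) \cap K$. For (ii), the naive candidate $\bar N := N \cap K$ already works with $C = 1$: clearly $\bar N \subset N$ and $\bar N \in \nu(K)$, and the index bound $[K : N \cap K] \leq [G:N]$ follows from $[G:N] = [G:NK]\,[K : N \cap K]$. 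It only remains to check that $\bar N$ is $\varphi$-invariant, and this is where I would exploit that $N$ is normal in all of $G$, not just in $K$: for each $h \in H$,
\[
\varphi(h)(\bar N) = s(h)(N \cap K) s(h)^{-1} = \bigl(s(h) N s(h)^{-1}\bigr) \cap \bigl(s(h) K s(h)^{-1}\bigr) = N \cap K = \bar N.
\]

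Feeding this into Lemma~\ref{lem_reduc_two} then yields the desired equivalence $\RF_{G,\nu(G),B_G} \approx \RF_{K,\Inv(\varphi),B_K}$. The proof is essentially a bookkeeping exercise, and I do not foresee serious technical obstacles. The one point worth highlighting is the observation in step (ii) that $N \cap K$ is itself $\varphi$-invariant thanks to the global normality of $N$ in $G$; without noticing this, one would be tempted to replace it by an orbit-intersection $\bigcap_{h \in H} \varphi(h)(N \cap K)$, which is invariant but only gives an index bound of order $[G:N]^{|H|}$, far too weak to plug into Lemma~\ref{lem_reduc_two}. Recognizing that no such averaging is needed is what lets the argument go through with the linear constant $C = 1$.
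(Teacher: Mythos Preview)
Your proof is correct and essentially identical to the paper's: both verify that $\nu(G)\cap K = \Inv(\varphi)$ (your steps (i) and (ii) are precisely the two inclusions of this equality) and then apply Lemma~\ref{lem_reduc_two} with $\bar N = N\cap K$ and $C=1$, using $[G:N] = [G:NK]\,[K:N\cap K]$ for the index bound. Your closing remark about the unnecessary orbit-intersection is a nice piece of commentary, but the core argument matches the paper exactly.
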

	\begin{proof}
		We first claim that $\nu(G)\cap K$ equals precisely $\Inv(\varphi)$. Indeed, take $N$ normal in $G$ arbitrary. Since $K$ is also a normal subgroup, the intersection $\bar{N} := N\cap K$ is normal in $G$. Take any $h\in H$, then $C_{s(h)}(\bar{N}) = \bar{N}$, where $C_{s(h)}$ denotes conjugation as mentioned above. By definition, this is precisely $\varphi(h)(\bar{N})$, so $\bar{N} \in \Inv(\varphi)$. 
		
		Conversely, if $N \in \Inv(\varphi)$, then $N\leq K$ and for $g\in G$ arbitrary we have
		$$g^{-1}Ng = s(h)^{-1}k^{-1}Nks(h) = \varphi(h)(N) = N,$$
		by writing $g = ks(h)$ for $k\in K$ and $h\in H$, so $N\in \nu(G)\cap K$.
		
		The statement of the theorem is now a direct application of Lemma \ref{lem_reduc_two}, because $K\lhd_f G$ and $\nu(G)\cap K$ equals precisely $\Inv(\varphi)$. For every $N\lhd G$, we can now set $\bar{N} = N\cap K$. We have the inequality $[K:\bar{N}] \leq [G:N]$, because $[G:N] = [G:KN]\cdot [K:K\cap N]$.
	\end{proof}
	Note that the previous result holds for general groups $K$, without assuming it is abelian as we will do further on. However, the assumption that $H$ is finite is crucial in order to use Lemma \ref{lem_reduc_two}, as we illustrate with the following example.
	
	\begin{ex}
		The groups $\Z^3$ and the discrete Heisenberg group $H_3(\Z)$ have different residual finiteness growth, namely $\log$ and $\log^3$ respectively, see \cite{bou2010quantifying}. However, both groups have a normal subgroup $\Z$ with quotient $\Z^2$, namely $$ \dfrac{\Z^3}{\Z\times\left\{0 \right\}^2} \cong \Z^2 \cong \dfrac{H_3(\Z)}{Z(H_3(\Z))},$$ and in both cases the morphism $\varphi: \Z^2 \to \Out(\Z) = \Aut(\Z)$ induced by conjugation is the trivial one.
	\end{ex}

	The class of virtually abelian groups is exactly the special case where $K$ is an abelian group and $H$ is finite. 
	\begin{df}
		A group $G$ is said to be virtually abelian if it has an abelian normal subgroup $K$ of finite index.
	\end{df}
Note that the map $\varphi$ is then a morphism $H\to \Aut(K)$, as $\Out(K) = \Aut(K)$ when $K$ is abelian. Since we assume that $G$ is finitely generated and hence $K$ as well, we know that, $K \cong \Z^m\oplus T$ for some $m\in \mathbb{N}$ and $|T| < \infty$ by the structure theorem of finitely generated abelian groups. If we take $K^\prime = \vert T \vert K \subset K$, then $K^\prime$ is a characteristic and torsion-free subgroup of $K$ and hence a normal subgroup of $G$. Hence without loss of generality we can assume that $K$ is a torsion-free normal subgroup. In fact, this argument is a simplified version of a more general statement for virtually polycyclic groups, which always have a finite index normal subgroup which is torsion-free, see \cite{ragh72}.

Since $K \cong \Z^m$ for some $m \geq 0$, we have reduced the problem of finding $\RF_G$ for a virtually abelian group $G$ to the question of determining $\RF_{\Z^m, \Inv(\varphi), B_{\Z^m}}$ for all $\varphi: H \to \GL(m,\Z)$ with $H$ finite. The next two sections deal with the upper and lower bound respectively. However, we first make some final comments about the representation $\varphi$ and how it depends on the choice of the subgroup $K$.

In the special case where $G$ is crystallographic, which is equivalent to $G$ being finitely generated, virtually abelian and for which every finite normal subgroup is trivial by \cite{deki96-1}, we can say more about the representation $\varphi$. Indeed, in this case, $G$ has a maximal abelian subgroup $A$ which is torsion-free and normal. In particular, we can take $K = A$, and then the representation $\varphi: H \to \Aut(A)$ becomes faithful. This representation is known as the holonomy representation. We refer to \cite{deki96-1} for more details. 

However, in the case where $G$ is not crystallographic, there is no canonical choice for the abelian normal subgroup $K$. For any two choices $K, K^\prime \subset G$ which are torsion-free abelian of maximal rank, we have that the groups $K$ and $K^\prime$ are commensurable, i.e.~$K \cap K^\prime$ is a subgroup of finite index in both $K$ and $K^\prime$. To see how the representation $\varphi$ varies over the subgroups $K$, it hence suffices to consider the case $K^\prime \subset K$ with corresponding groups $H = G / K, H^\prime = G /K^\prime$ and maps $\varphi: H \to \Aut(K)$, $\varphi^\prime: H^\prime \to \Aut(K^\prime)$. 

Note that the subgroup $K /K^\prime = M$ is a normal subgroup of $H^\prime$ which acts trivially on $K^\prime$ by conjugation, and thus $M$ lies in the kernel of $\varphi^\prime$. Hence, we find that the induced representation $$ \overline{\varphi}^\prime: H = H^\prime/M \to \Aut(K^\prime)$$ is by definition the restriction of the representation $\varphi$ to the invariant subgroup $K^\prime \subset K$. As $K^\prime$ has finite index in $K$, the representations $\varphi$ and $\varphi^\prime$ are equivalent over the rational numbers $\Q$. In particular, we conclude that the image $\varphi(H) \subset \GL(m,\Z)$ does not depend on the choice of $K$ up to $\Q$-equivalence. 

As our main result only depends on the representation $\varphi$ over $\C$, this $\Q$-equivalence does not play a role further and we can just fix one choice of subgroup $K$.

	\section{Proof of the upper bound} \label{sec_upper}
		
	In the previous section, we showed how $\RF_{G}$ for finitely generated virtually abelian group $G$ only depends on the corresponding representation $\varphi: H \to \GL(m,\Z)$ for some finite group $H$. 
	
	 Recall that a (linear) representation of a finite group $H$ is by definition a group homomorphism $\varphi : H \to \GL(m, \mathbb{F})$ for some field $\mathbb{F}$. The special case where the entries lie in $\Z$ is important for our main results.
	\begin{df}
		We say a group representation is integral if it is of the form $H\to \GL(m, \Z)$.
	\end{df}
If $\varphi: H \to GL(m,\mathbb{F})$ is a linear representation and $\mathbb{F} \subset \mathbb{E}$ is a field extension, we also get a linear representation $\varphi^\mathbb{E}: H \to GL(m,\mathbb{E})$. Sometimes we will write both representations as $\varphi$ if it is clear from the context over which field we are working.

The advantage of working over fields is that every representation can be decomposed into irreducible subrepresentations. If $\varphi: H \to \GL(m, \mathbb{F})$ is a representation, then a subrepresentation is a subspace $W \subset \mathbb{F}^m$ which is invariant under every element $\varphi(h)$ for $h \in H$. After choosing a basis for $W$, we then also get a map $H \to \GL(m^\prime,\mathbb{F})$. We call a representation irreducible if the only subrepresentations are the trivial ones, namely $0$ and $\mathbb{F}^m$. Every group representation $\varphi: H \to \GL(m, \mathbb{F})$ can be written as 
$$\varphi = \varphi_1 \times \dots \times \varphi_n,$$
where $\varphi_i$ are irreducible subrepresentations, by restricting $\varphi$ to a subspace $W_i$ with $F^m = W_1 \oplus \ldots \oplus W_n$. After choosing a basis for the spaces $W_i$ we can also see $\varphi_i: H \to \GL(m_i,\mathbb{F})$ with $m_i$ the dimension of $W_i$. 

If $\varphi$ is irreducible over any field extension of $\mathbb{F}$, then it is called absolutely irreducible and the field $\mathbb{F}$ is called a splitting field of $\varphi$.	By \cite[p. 292 \& p. 475]{curtis2006representation} we have the following result:
	\begin{thm} \label{thm_splitting_field}
		If a field $\mathbb{F}$ contains an $|H|$-th root of unity, then it is a splitting field of $\varphi$.
	\end{thm}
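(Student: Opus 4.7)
The plan is to outline the classical proof of this textbook result rather than attempt a novel argument, since the theorem is attributed to Curtis--Reiner. The goal is to show that every irreducible representation $\psi : H \to \GL(m',\overline{\mathbb{F}})$ of $H$ over an algebraic closure of $\mathbb{F}$ is equivalent to one with entries already in $\mathbb{F}$, which is precisely what it means for $\mathbb{F}$ to be a splitting field.

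The first step is an elementary eigenvalue observation: for every $h \in H$, the matrix $\psi(h)$ has order dividing $|H|$ and is annihilated by $X^{|H|} - 1$. Once $\mathbb{F}$ contains a primitive $|H|$-th root of unity, this polynomial splits into distinct linear factors over $\mathbb{F}$ (the relevant characteristic is zero in the applications of this paper, since we ultimately work over $\C$), so all eigenvalues of $\psi(h)$ lie in $\mathbb{F}$. In particular, the character $\chi_\psi(h) = \operatorname{tr}\psi(h)$ takes its values in $\mathbb{F}$.

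The harder step is to promote this statement about characters to one about representations themselves. I would invoke Brauer's induction theorem: every character of $H$ is a $\mathbb{Z}$-linear combination of characters induced from \emph{elementary} subgroups, i.e.\ direct products of a cyclic group with a $p$-group. For such elementary subgroups one checks directly, using the abelian case and Clifford theory, that every irreducible representation is realizable over $\mathbb{F}$ as soon as $\mathbb{F}$ contains enough roots of unity. Combining this with a Schur index computation, which shows that the Schur index equals $1$ whenever the character is $\mathbb{F}$-valued, one finally obtains an $\mathbb{F}$-rational model of $\psi$.

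The main obstacle is Brauer's induction theorem itself, whose proof requires a substantial amount of character-theoretic machinery and is genuinely non-trivial. In the context of this paper the most reasonable choice is simply to cite Curtis--Reiner, as the authors do, and use the result as a black box for the remainder of the argument.
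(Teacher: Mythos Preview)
The paper does not give any proof of this theorem; it simply cites Curtis--Reiner \cite[p.~292 \& p.~475]{curtis2006representation} and moves on. Your outline of the classical argument via Brauer's induction theorem and the Schur index is a correct sketch of what lies behind that citation, and your final remark---that one should treat the result as a black box here---is exactly what the authors do. So your proposal agrees with the paper's ``proof'' (namely, a reference), with the added value of indicating why the result is true.
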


	Our main goal in this section is to give an upper bound for $\RF_G$ depending on how $\varphi$ splits over the complex numbers, as given in the following result. 
	
		\begin{thm} \label{thm_upper}
		Let $\varphi: H \to \GL(m, \Z)$ be an integral representation of a finite group $H$. Suppose that all the irreducible $\mathbb{C}$-subrepresentations have degree smaller or equal to $k$, then $$\RF_{\Z^m, \Inv(\varphi),B_{\Z^m}} \preceq \log^k.$$
	\end{thm}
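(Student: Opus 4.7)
The plan is to produce, for each nonzero $v \in B_{\Z^m}(r)$, a $\varphi$-invariant finite-index subgroup $N \trianglelefteq \Z^m$ with $v \notin N$ and $[\Z^m : N] \leq C (\log r)^k$. The natural candidates are the preimages under $\Z^m \twoheadrightarrow \mathbb{F}_p^m$ of proper $\varphi$-invariant subspaces of $\mathbb{F}_p^m$ for a carefully chosen prime $p$, so the whole argument reduces to choosing $p$ well and exploiting the decomposition of $\varphi$ modulo $p$.

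The first step is to impose two conditions on $p$: (i) $|H| \mid p-1$, which by Theorem \ref{thm_splitting_field} makes $\mathbb{F}_p$ a splitting field for $H$ and simultaneously forces $p > |H|$, so that Maschke's theorem applies to $\mathbb{F}_p[H]$-modules; and (ii) $p \nmid \gcd(v_1, \ldots, v_m)$, so that the image $\bar v \in \mathbb{F}_p^m$ is nonzero. Under (i), Maschke together with the splitting-field property decomposes $\mathbb{F}_p^m$ into absolutely irreducible $\mathbb{F}_p[H]$-summands; since the character of $\varphi$ is $\Z$-valued, standard modular representation theory in coprime characteristic identifies these summands (with multiplicities) with the irreducible $\mathbb{C}$-constituents of $\varphi^{\mathbb{C}}$. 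In particular every irreducible summand of $\mathbb{F}_p^m$ has dimension at most $k$.

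Given such a $p$, write $\mathbb{F}_p^m = W_1 \oplus \cdots \oplus W_s$ with each $\dim W_i \leq k$. Since $\bar v \neq 0$, the $W_i$-component of $\bar v$ is nonzero for some $i$. Then $\bar N := \bigoplus_{j \neq i} W_j$ is a $\varphi$-invariant subspace of codimension $\dim W_i \leq k$ not containing $\bar v$. Pulling back along $\Z^m \twoheadrightarrow \mathbb{F}_p^m$ yields a subgroup $N \in \Inv(\varphi)$ (invariance is immediate since the projection is $H$-equivariant) with $v \notin N$ and $[\Z^m : N] = p^{\dim W_i} \leq p^k$, so $D_{\Z^m, \Inv(\varphi)}(v) \leq p^k$.

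The main technical point, which I expect to be the hardest part, is securing such a prime with $p \leq C \log r$ for a constant $C$ depending only on $\varphi$. By Corollary \ref{cor_independent} the standard generating set for $\Z^m$ may be used, so each coordinate of $v \in B_{\Z^m}(r)$ is bounded in absolute value by $r$ and $d := \gcd(v_1, \ldots, v_m)$ is a positive integer at most $r$. Hence the primes excluded by condition (ii) number at most $\omega(d) = O(\log r / \log \log r)$. On the other hand, by the prime number theorem in arithmetic progressions, the number of primes $p \leq N$ with $p \equiv 1 \pmod{|H|}$ is at least $c N / \log N$ for some $c = c(|H|) > 0$. Choosing $N = C_0 \log r$ with $C_0$ sufficiently large produces strictly more admissible primes than the total number of exclusions, so some valid $p \leq C_0 \log r$ exists. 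This gives $D_{\Z^m, \Inv(\varphi)}(v) \leq p^k \leq C_0^k (\log r)^k$, establishing $\RF_{\Z^m, \Inv(\varphi), B_{\Z^m}} \preceq \log^k$.
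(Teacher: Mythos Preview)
Your proof is correct and follows essentially the same strategy as the paper: reduce modulo a prime $p \equiv 1 \pmod{|H|}$ of size $O(\log r)$ so that $\mathbb{F}_p$ is a splitting field, decompose $\mathbb{F}_p^m$ into absolutely irreducible summands of dimension $\leq k$, and take the complement of a summand on which $\bar v$ survives. The only cosmetic differences are that the paper picks $p$ not dividing a single nonzero coordinate of $v$ (via its Proposition~\ref{prop_Dlogn_gPNT}) rather than the gcd, and phrases the construction through Lemmas~\ref{lem_direct_sum} and~\ref{lem_surjective} instead of an explicit complement; the matching of $\mathbb{F}_p$- and $\mathbb{C}$-irreducible dimensions is handled in the paper by citing \cite[Theorem~15.13]{isaacs2006character}.
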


The main tool here is the density of primes in certain subsets of $\N$. As often in number theory, we write $f \asymp g$ for functions $f,g: \R^+ \to \R^+$ if there exists constants $C_1,C_2 > 0$ such that
	$$ C_1 \leq \liminf_{x\to \infty} \frac{f(x)}{g(x)} \leq \limsup_{x\to \infty} \frac{f(x)}{g(x)} \leq C_2.$$
	In other words, $f \asymp g$ if and only if there exists a constant $M>0$ such that for all $x\geq M$ it holds that
	$$C_1g(x) \leq f(x) \leq C_2g(x).$$
	\begin{df} Let $S\subset \left\{p \in \mathbb{N}\mid p \text{ is prime} \right\}$ be any subset of primes. We define the function $\pi_S: \N \to \N$ as $\pi_S(x) = |\left\{p \in S\mid p\leq x \right\}|$, i.~e.~$\pi_S(x)$ is the number of elements in $S$ smaller than or equal to $x$. 
	\end{df}

The following results about the asymptotics of $\pi_S$ are well-known.
	\begin{lemma} \label{lem_PNT}
		The following are equivalent for subsets $S \subset \left\{p \in \mathbb{N}\mid p \text{ is prime} \right\}$ of prime numbers:
		\begin{enumerate}
			\item $\pi_S(x) \asymp \dfrac{x}{\log(x)}$,
			\item $\log(\prod_{p\in S, p\leq x}p) \asymp x$.
		\end{enumerate}
	\end{lemma}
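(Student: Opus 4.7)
Let $\theta_S(x) = \log\left(\prod_{p\in S,\, p\leq x} p\right) = \sum_{p\in S,\, p\leq x} \log p$. The plan is to establish the equivalence by proving two elementary inequalities that relate $\pi_S(x)$ and $\theta_S(x)$, in the spirit of the classical Chebyshev estimates. These inequalities are independent of the specific subset $S$ of primes, and each of the asymptotic bounds ($\asymp$) will follow by applying them to the limsup and liminf separately.

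The first inequality is the trivial upper bound
\[ \theta_S(x) \;=\; \sum_{p\in S,\, p\leq x} \log p \;\leq\; \pi_S(x)\,\log(x), \]
obtained by bounding each $\log p$ by $\log x$. The second inequality is the lower bound obtained by restricting the sum to primes in $[x^{\alpha}, x]$ for some fixed $0 < \alpha < 1$:
\[ \theta_S(x) \;\geq\; \sum_{p\in S,\, x^{\alpha}\leq p\leq x} \log p \;\geq\; \alpha\log(x)\bigl(\pi_S(x) - \pi_S(x^{\alpha})\bigr) \;\geq\; \alpha\log(x)\bigl(\pi_S(x) - x^{\alpha}\bigr), \]
using the trivial bound $\pi_S(x^\alpha) \leq x^\alpha$.

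From here the equivalence is immediate. For (1)$\Rightarrow$(2), an upper bound $\pi_S(x) \leq C_2 x/\log(x)$ combined with the first inequality yields $\theta_S(x) \leq C_2 x$, while a lower bound $\pi_S(x) \geq C_1 x/\log(x)$ combined with the second inequality gives, for $x$ large enough that $x^{\alpha} \leq \tfrac{1}{2} C_1 x/\log(x)$, the estimate $\theta_S(x) \geq (\alpha C_1/2)\, x$. Conversely, for (2)$\Rightarrow$(1), the second inequality rearranges to
\[ \pi_S(x) \;\leq\; \frac{\theta_S(x)}{\alpha \log(x)} + x^{\alpha}, \]
so $\theta_S(x) \leq C_2 x$ forces $\pi_S(x) \leq (C_2/\alpha)\, x/\log(x) + x^\alpha \lesssim x/\log(x)$ since $\alpha < 1$; and the first inequality rearranges to $\pi_S(x) \geq \theta_S(x)/\log(x)$, so $\theta_S(x) \geq C_1 x$ immediately yields $\pi_S(x) \geq C_1 x/\log(x)$.

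There is no real obstacle here; the only subtlety is keeping track of the error term $x^{\alpha}$ when passing between the two quantities, which is handled by choosing any fixed $\alpha \in (0,1)$ (for instance $\alpha = 1/2$) and noting that $x^{\alpha} = o(x/\log(x))$. The constants $C_1, C_2$ in the two directions of the $\asymp$ relation are not preserved, but they are rescaled by explicit factors depending only on $\alpha$, which is sufficient since $\asymp$ only records the existence of such constants.
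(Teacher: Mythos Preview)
Your proof is correct: this is the standard Chebyshev-type argument relating $\pi_S$ and $\theta_S$, and all the inequalities and asymptotic manipulations are sound. The paper does not actually supply a proof of this lemma; it is stated as ``well-known'' and used without justification, so your argument fills in exactly what the authors omitted.
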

	We know that various subsets $S$ satisfy the lemma above by certain prime number theorems:
	\begin{thm}
		The following subsets $S$ satisfy the lemma above:
		\begin{enumerate}
			\item the set $S = \left\{p \in \N \mid p \text{ is prime} \right\}$ of all primes;
			\item the set $S = \left\{p\mid p \text{ prime and } p\equiv a \mod n \right\}$ for fixed $a$ and $n$ with $\ggd(a,n) = 1$.
		\end{enumerate}
	\end{thm}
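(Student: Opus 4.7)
The plan is to verify condition (1) of Lemma \ref{lem_PNT} directly for each of the two listed subsets; by the equivalence in the lemma, condition (2) then follows automatically. Hence the theorem is really a bookkeeping statement: given the two flavors of asymptotic density, cite the appropriate classical prime number theorem.

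For the first subset $S = \{p \in \N \mid p \text{ prime}\}$, the required estimate $\pi_S(x) \asymp x/\log(x)$ is exactly the content of the Prime Number Theorem, which asserts $\pi(x) \sim x/\log(x)$. Concretely, for any $C_1 < 1 < C_2$ and all sufficiently large $x$ one has $C_1\, x/\log(x) \leq \pi_S(x) \leq C_2\, x/\log(x)$, which is precisely the $\asymp$ relation used in the paper.

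For the second subset $S = \{p \text{ prime} \mid p \equiv a \pmod{n}\}$ with $\ggd(a,n)=1$, I would invoke the quantitative form of Dirichlet's theorem on primes in arithmetic progressions, commonly called the Prime Number Theorem for Arithmetic Progressions (de la Vallée Poussin). It states
$$\pi_S(x) \;\sim\; \frac{1}{\phi(n)}\cdot\frac{x}{\log(x)},$$
where $\phi(n)$ denotes the number of residues in $\{1,\dots,n\}$ coprime to $n$. Since $\phi(n)$ is a fixed positive constant depending only on $n$, this immediately yields $\pi_S(x) \asymp x/\log(x)$ with the $\asymp$-constants depending on $n$.

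Both statements are standard in analytic number theory and the proof therefore reduces to citing the appropriate reference; there is no substantial technical obstacle. The only mild point to flag is that the hypothesis $\ggd(a,n)=1$ cannot be dropped: any common prime factor $q$ of $a$ and $n$ would force every $p \in S$ to be divisible by $q$, leaving at most one prime in $S$ and making the asymptotic fail trivially.
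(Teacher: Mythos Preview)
Your proposal is correct and matches the paper's own proof exactly: the paper simply cites the Prime Number Theorem for part (1) and the Prime Number Theorem in arithmetic progressions for part (2), referring to \cite{fine2007number}. Your additional remarks about the constants and the necessity of the coprimality hypothesis are accurate but go slightly beyond what the paper records.
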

	\begin{proof}
		The first claim follows by the prime number theorem, the second by the prime number theorem in arithmetic progressions, see for example \cite{fine2007number}.
	\end{proof}
	It should be noted that this result is a special case of Chebotarev's density theorem \cite{tschebotareff1926bestimmung}. However, since we do not need this more general density theorem, we will not discuss it further in this paper. 
	
	For the sets $S$ mentioned above, the following result holds:
	\begin{prop} \label{prop_Dlogn_gPNT}
		Suppose $S$ is a subset of primes satisfying the equivalent conditions of Lemma \ref{lem_PNT}. There exist numbers $D_1,D_2>0$ such that for all $m\in\mathbb{N}$ there exists a prime number $p\in S$ with $p \leq D_1\cdot \log(m) + D_2$ with $p\nmid m$ (or equivalently $m\notin p\Z$).
	\end{prop}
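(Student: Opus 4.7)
The plan is to argue by contrapositive using condition (2) of Lemma \ref{lem_PNT} directly. Suppose that for some real $x \geq 1$, every prime $p \in S$ with $p \leq x$ divides $m$. Since distinct primes are pairwise coprime, their product $\prod_{p \in S,\, p \leq x} p$ then also divides $m$, so in particular $m \geq \prod_{p \in S,\, p \leq x} p$, giving
$$\log(m) \;\geq\; \log\!\Bigl(\prod_{p \in S,\, p \leq x} p\Bigr).$$
By Lemma \ref{lem_PNT}(2), there exist constants $C_1 > 0$ and $M \geq 1$ such that for all $x \geq M$ the right-hand side is at least $C_1 x$. Hence if $x \geq M$ and every prime of $S$ below $x$ divides $m$, we necessarily have $x \leq \log(m)/C_1$.

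Contrapositively, whenever $x \geq M$ and $x > \log(m)/C_1$, there must exist some $p \in S$ with $p \leq x$ and $p \nmid m$. Setting $x := \frac{1}{C_1}\log(m) + M + 1$ automatically fulfills both inequalities for every $m \geq 1$, so taking $D_1 := 1/C_1$ and $D_2 := M + 1$ yields the desired bound. The only boundary case to verify is when $\log(m)$ is very small (essentially $m = 1$), where one simply needs the existence of at least one prime in $S$ below $D_2$; this is guaranteed because $S$ is infinite by condition (1), and if needed $D_2$ can be enlarged to absorb the smallest prime of $S$.

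I do not expect any serious obstacle here: the statement is essentially a direct packaging of Lemma \ref{lem_PNT}(2). The only minor care needed is to keep the additive constant $D_2$ around so that the bound makes sense for small $m$ (where $\log(m)$ alone is too small to exceed the threshold $M$ coming from the asymptotic $\asymp$), and to be explicit that ``every small prime divides $m$'' implies ``the product of small primes divides $m$'', which uses that the primes are distinct.
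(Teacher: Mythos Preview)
Your proof is correct and uses essentially the same idea as the paper: if every prime of $S$ below $x$ divides $m$, then their product divides $m$, and condition~(2) of Lemma~\ref{lem_PNT} forces $x$ to be at most a constant times $\log m$. The only difference is packaging: the paper argues by contradiction via an auxiliary sequence $m_n$, whereas you extract the constants $D_1=1/C_1$ and $D_2=M+1$ directly from the contrapositive, which is slightly cleaner (and in fact your choice of $D_2$ already handles $m=1$ without needing enlargement, since $\log\bigl(\prod_{p\in S,\,p\le M+1}p\bigr)\ge C_1(M+1)>0$ guarantees a prime of $S$ below $M+1$).
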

	\begin{proof}
		Let all primes in this proof denote primes in $S$. We first show the result for all $m\geq 2$, where $D_2=0$.		Suppose such a fixed number $D_1$ does not exist. This means that for $n\in\mathbb{N}$, there must be at least one element $m_n \geq 2$ such that $p \mid m_n$ as soon as $p\leq n\cdot \log(m_n)$ and $p \in S$. As a consequence, we see that $$\prod_{\substack{p\leq n \log(m_n) \\ p \in S}} p \mid m_n.$$
		In particular, this means that $\displaystyle \prod_{\substack{p\leq n \log(m_n) \\ p \in S}}p \leq m_n$.\\
		We claim this is a contradiction to lemma \ref{lem_PNT}. Indeed, since $n \log(m_n)$ must go to infinity, this lemma says precisely that there exists a number $C$ such that for $n$ large enough we have the inequality
		$$0< C \leq \frac{\log \left( \prod_{\substack{p\leq n \log(m_n) \\ p \in S} }p \right)}{n \log(m_n)}.$$
		This means that $m_n^{Cn} \leq \displaystyle \prod_{\substack{p\leq n \log(m_n) \\ p \in S}}p $. Since $m_n \geq 2$ and for $n$ large $Cn > 1$, we have $m_n < m_n^{Cn}$. Hence, this lower bound on $\displaystyle \prod_{\substack{p\leq n \log(m_n) \\ p \in S}}p $ contradicts the earlier upper bound, so the statement must hold.
		
		If we pick $D_2$ to be the smallest prime in $S$, then the statement is also satisfied for $m = 1$.
	\end{proof}
	
Combining these results about the density of prime numbers with the theory of splitting fields for representations, we get the proof of Theorem \ref{thm_upper}.

	\begin{proof}[Proof of Theorem \ref{thm_upper}]
		Consider the standard word norm on $\Z^m$. Take $0\neq v \in B_{\Z^m}(r)$ arbitrary. Since this vector is non-zero, we can find one non-zero entry $a$. By Proposition \ref{prop_Dlogn_gPNT}, we can take a prime number $$p \leq D_1\cdot \log(|a|) + D_2 \leq D_1\cdot \log(\norm{v}_{\Z^m})+D_2$$ such that $a \notin p\Z$ and $p\equiv 1 \mod |H|$. In particular, $\Z_p$ contains a primitive $|H|$-th root of unity. By construction, $v\notin p\Z^m$. Let $\psi$ be the map $\Z^m \to \Z^m_p: v \mapsto v \mod p$. We claim that
		$$D_{\Z^m, \Inv(\varphi)}(v) \leq D_{\Z_p^m, \Inv(\phi)}(\psi(v))$$
		by Lemma \ref{lem_surjective}, where $\phi: H \to \GL(m,\Z_p)$ is the map $\varphi$ modulo $p$, so satisfying $$\phi(h)(\psi(v)) = \psi(\varphi(h)(v))$$ for all $v \in \Z^m, h \in H$. Indeed, we have to verify that $\psi^{-1}(\Inv(\phi)) \subset \Inv(\varphi)$, but this follows easily from the definition of $\phi$.
		
		
		By theorem \ref{thm_splitting_field}, we know $\Z_p$ is a splitting field for $\phi$. Hence, we can write
		$$\phi = \phi_1\times \dots \times \phi_n,$$
		where $\phi_i$ is an absolutely irreducible subrepresentation corresponding to the $\phi$-invariant $\Z_p$-subspaces $V_i$ for each $1\leq i\leq n$. 
		
		Now we wish to apply Lemma \ref{lem_direct_sum}. Hence, we will argue that $\Inv(\phi)\cap V_i$ equals $\Inv(\phi_i)$. For $\Inv(\phi)\cap V_i \subset \Inv(\phi_i)$, suppose $N\in \Inv(\phi)$. If $w\in N\cap V_i$, then $\phi_i(h)(w) := \phi(h)(w) \in N\cap V_i$, since $N$ and $V_i$ are $\phi$-invariant. Hence, $N\cap V_i$ is $\phi_i$-invariant. The other inclusion is similar. It is also clear that the conditions
		$$\displaystyle \bigoplus_{i=1}^n\Inv(\phi_i) \subset \Inv(\phi)$$
		and $V_i \in \Inv(\phi_i)$ are satisfied.
		
	To apply the lemma, write $\psi(v) = v_1 + \dots + v_n$ where $v_i \in V_i$. Note that since $\psi(v)$ is non-zero, so is at least one of the $v_i$'s. We get
		$$D_{\Z_p^m, \Inv(\phi)}(\psi(v)) \leq \min\left\{D_{V_i, \Inv(\phi_i)}(v_i) \mid 1 \leq i \leq n \right\}.$$
		Let the dimension of $V_i$ as a $\Z_p$-vector space be $m_i$, then clearly $D_{V_i, \Inv(\phi_i),B_i}(v_i) \leq |V_i| \leq p^{m_i}$. In particular, let $k = \max\left\{m_i\mid 1\leq i\leq n \right\}$, then 
		$$D_{\Z_p^m, \Inv(\phi)}(\psi(v)) \leq p^k \leq (D_1\cdot \log(\norm{v}_{\Z^m})+D_2)^k.$$
		We end by noting that $k$ is also the maximal degree of the irreducible $\mathbb{C}$-subrepresentations of $\varphi$ by  \cite[Theorem 15.13]{isaacs2006character}.
	\end{proof}
	\section{Proof of the lower bound} \label{sec_lower}
	The goal of this section is to show that the inequality in Theorem \ref{thm_upper} is in fact optimal, by producing the corresponding lower bound. We proceed in three steps. First, we reduce our problem further to show that we may restrict our attention to $\Q$-irreducible representations. Then, we describe how matrices that commute with a given $\Q$-irreducible representation look like, by using the theory of Galois Descent. Finally, we proceed by comparing $\RF_{\Z^m, \Inv(\varphi), B_{\Z^m}}$ to the more restrictive $\RF_{\Z^m, \Com(\varphi), B_{\Z^m}}$, where $\Com(\varphi)$ consists of those subgroups that can be written as $\Im B$ for some matrix $B$ that commutes with $\varphi$. The claimed lower bound then follows from combining the previous steps.
	
	\subsection{Reduction to $\Q$-irreducible representations}
	Suppose $\varphi$ is an integral representation considered over the field $\Q$, then we can write 
	$$\varphi = \varphi_1\times \dots \times \varphi_n,$$
	where $\varphi_i$ with $1\leq i\leq n$ are irreducible $\mathbb{Q}$-subrepresentations with corresponding irreducible $\mathbb{Q}$-vector subspaces $W_i$, so $\varphi_i$ is the restriction of $\varphi$ to $W_i$. We write $P_i: \mathbb{Q}^m \to W_i$ for the natural projection onto $W_i$.

\begin{thm} \label{thm_irred_Q_split}
	Let $K_i$ denote $P_i(\Z^m)$, then $\RF_{\Z^m, \Inv(\varphi),B_{\Z^m}} \approx \max\left\{\RF_{K_i, \Inv(\varphi_i),B_i} \mid 1\leq i\leq n \right\}$.
\end{thm}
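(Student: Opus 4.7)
The plan is to realize $\Z^m$ as a finite-index subgroup of the enlarged lattice $L := K_1 \oplus \cdots \oplus K_n$ inside $\Q^m$, and to extend $\varphi$ to a representation $\tilde\varphi: H \to \Aut(L)$ which splits as $\varphi_1 \oplus \cdots \oplus \varphi_n$. Each $v \in \Z^m$ satisfies $v = P_1(v) + \cdots + P_n(v) \in K_1 + \cdots + K_n = L$, so $\Z^m \subset L$; since both $\Z^m$ and $L$ are full-rank lattices in $\Q^m$, the index $[L:\Z^m]$ is finite. Each $W_i$ is $\varphi$-invariant, hence so is $K_i$, and the extension $\tilde\varphi$ is uniquely determined.

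Step~1 is to prove $\RF_{\Z^m, \Inv(\varphi), B_{\Z^m}} \approx \RF_{L, \Inv(\tilde\varphi), B_L}$ by applying Lemma \ref{lem_reduc_two} with $G = L$, $K = \Z^m$, $P = \Inv(\tilde\varphi)$, and $\bar P = \Inv(\varphi)$. The hypothesis $L \in P$ is clear. The inclusion $\bar P \subset P \cap K$ holds because any $M \in \Inv(\varphi)$ is automatically $\tilde\varphi$-invariant and of finite index in $L$, while $M \cap \Z^m = M$. The compatibility estimate is obtained by choosing $\bar N := N \cap \Z^m$ for each $N \in \Inv(\tilde\varphi)$: it is $\varphi$-invariant, contained in $N$, and the natural injection $\Z^m / \bar N \hookrightarrow L / N$ gives $[\Z^m : \bar N] \leq [L:N]$, so $C = 1$ suffices.

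Step~2 is to apply Lemma \ref{lem_direct_sum} to $L = K_1 \oplus \cdots \oplus K_n$ with the family $\Inv(\tilde\varphi)$. A direct check shows that $\Inv(\tilde\varphi) \cap K_i$ coincides with $\Inv(\varphi_i)$ (noting that any $M \in \Inv(\varphi_i)$ arises as $N \cap K_i$ for $N := M \oplus \bigoplus_{j \neq i} K_j$, which has finite index in $L$), and that $\bigoplus_i \Inv(\varphi_i) \subset \Inv(\tilde\varphi)$ because a direct sum of $\varphi_i$-invariant subgroups is stable under $\tilde\varphi$. The condition $K_i \in \Inv(\varphi_i)$ is immediate. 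Lemma \ref{lem_direct_sum} therefore gives
$$\RF_{L, \Inv(\tilde\varphi), B_L} \approx \max_{1 \leq i \leq n} \RF_{K_i, \Inv(\varphi_i), B_i},$$
and chaining with Step~1 yields the theorem.

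The only real subtlety is the bookkeeping around families of normal subgroups: one must verify carefully that the intersection $\Inv(\tilde\varphi) \cap K_i$ (in the sense of the operations on subsets of $\nu(G)$ defined in Section \ref{sec_resid_fin_growth}) genuinely recovers $\Inv(\varphi_i)$ rather than some strictly smaller family, and that the $\tilde\varphi$-invariance/$\varphi$-invariance correspondence between $L$ and $\Z^m$ is faithful. Both identifications rely on the finite-index relationship $[L:\Z^m] < \infty$ together with the fact that each $K_i$ is a $\tilde\varphi$-stable summand of $L$ on which $\tilde\varphi$ restricts to $\varphi_i$.
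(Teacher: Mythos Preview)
Your proposal is correct and follows essentially the same route as the paper: enlarge $\Z^m$ to the lattice $L=\bigoplus_i K_i$, apply Lemma~\ref{lem_reduc_two} to pass from $(\Z^m,\Inv(\varphi))$ to $(L,\Inv(\tilde\varphi))$, and then apply Lemma~\ref{lem_direct_sum} to split along the $K_i$. Your verification of the hypotheses (in particular the equality $\Inv(\tilde\varphi)\cap K_i=\Inv(\varphi_i)$ and the index estimate via $\Z^m/(N\cap\Z^m)\hookrightarrow L/N$) is in fact spelled out in more detail than in the paper's own proof.
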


	By corollary \ref{cor_independent}, this equality holds for any choice of metric balls $B_i$, hence we do not specify the specific choice of metric on the $K_i$.

\begin{proof}
	Note first that the linear map $L(v) = \sum_{i=1}^nP_i(v)$ is the identity map on $\Q^m$ by construction. Set $K = \oplus_{i=1}^n K_i$. We have the following inclusion:
	$$\Z^m = L(\Z^m) \subset \oplus_{i=1}^n P_i(\Z^m) = \oplus_{i=1}^n K_i = K.$$
	As $K$ by construction is an abelian group of rank at most $m$, it must have rank exactly $m$ and contain $\Z^m$ as a subgroup of finite index. Since $\varphi$ is integral, we know that for all $v\in \Z^m$ the vector $\varphi(h)(v)$ lies in $\Z^m$. Decomposing $v$ as $\sum_{i=1}^n v_i$, where $v_i = P_i(v) \in K_i$, we see that
	$$\varphi(h)(v) = \sum_{i=1}^n\varphi(h)(v_i) = \sum_{i=1}^n\varphi_i(h)(v_i),$$ where thus $\varphi_i(h)(v_i) \in K_i$. In particular, if $v_i \in K_i$, then also $\varphi_i(h)(v_i) \in K_i$ and thus $\varphi_i: H \to \Aut(K_i)$ is well-defined. 
	Let $\bar{\varphi}$ be the map $$H\to \Aut(K): h \mapsto \sum_{i=1}^n\varphi_i(h)\circ P_i.$$ The previous computation shows that $\bar{\varphi}$ is the natural extension of $\varphi: H\to \GL(m, \Z)$ to $\GL(m, \Z) \subset \Aut(K)$. We can apply lemma \ref{lem_reduc_two} to obtain the equality
	$$\RF_{\Z^m, \Inv(\varphi),B_{\Z^m}} \approx \RF_{K, \Inv(\bar{\varphi}),B_K}.$$
	Indeed, $\Z^m \lhd_f K$, $\Inv(\bar{\varphi})\cap \Z^m = \Inv(\varphi)$, since $\bar{\varphi}$ is the natural extension of $\varphi$, and for every $N \in \Inv(\bar{\varphi})$ we have $[\Z^m: N\cap \Z^m] \leq [K:N]$.\\
	We conclude the proof by applying lemma \ref{lem_direct_sum}, using that $\bar{\varphi} = \varphi_1\times \dots \times \varphi_n$ on $K = \oplus_{i=1}^n K_i$.
\end{proof} 
	Note that $K_i \cong \Z^{m_i}$ for some $m_i$. Hence we may indeed reduce our attention to $\RF_{\Z^m, \Inv(\varphi), B_{\Z^m}}$, where $\varphi$ is $\mathbb{Q}$-irreducible.
	
	\subsection{Matrices commuting with $\mathbb{Q}$-irreducible representations} \label{sec_irrQrepr}
	Let $\varphi: H \to \GL(m, \Z)$ be an integral representation which is irreducible as a representation over $\mathbb{Q}$. In this section we discuss the matrices that commute with $\varphi$, i.e.~we investigate the $B\in \GL(m, \mathbb{Q})$ such that $\varphi(h)B = B\varphi(h)$ for all $h\in H$. For this, we will use some notions of Galois theory, for which we refer to \cite{Winter1974Fields} for more background and notation.

For the following definitions we work with the standard basis of $\mathbb{K}^m$. If $\sigma$ is an automorphism of $\mathbb{K}$ and $v\in \mathbb{K}^m$, then we write $\sigma(v)$ for the vector obtained by applying $\sigma$ on the entries of the vector $v$.
	\begin{df}
		We say a $\mathbb{K}$-vector subspace $W\subset \mathbb{K}^m$ is minimal over the field $\mathbb{F}$ if $W$ has a basis with entries over $\mathbb{F}$, but no basis with entries over any strictly smaller field $\mathbb{L}$.
	\end{df}
	Note that for any subspace $W \subset \mathbb{K}^m$, the set $\sigma(W) = \{\sigma(w) \mid w \in W\}$ is also a vector space over $\mathbb{K}$, and that $\sigma(W) = W$ if $W$ has a basis with entries over $\mathbb{F}$.
	\begin{lemma} \label{lem_sigmaW_irr}
		Let $\varphi: H\to \GL(m, \mathbb{Q})$ be a representation of a finite group and $\mathbb{K}$ be some number field. If $W$ is an irreducible $\mathbb{K}$-subspace of $\varphi^\mathbb{K}$ and $\sigma$ is an automorphism of $\mathbb{K}$, then $\sigma(W)$ is also $\varphi^\mathbb{K}$-irreducible. 
	\end{lemma}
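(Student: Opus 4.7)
The plan is to verify the three properties of $\sigma(W)$ in turn: it is a $\mathbb{K}$-subspace, it is $\varphi^\mathbb{K}$-invariant, and it is irreducible. The key observation that makes everything work is that $\sigma$ restricts to the identity on $\mathbb{Q}$ (as $\mathbb{Q}$ is the prime field, any field automorphism of a characteristic-zero field fixes $\mathbb{Q}$ pointwise), so for any matrix $A$ with rational entries and any vector $v \in \mathbb{K}^m$ one has $\sigma(Av) = \sigma(A)\sigma(v) = A\sigma(v)$, where $\sigma$ acts entrywise.

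First I would check that $\sigma(W)$ is closed under addition and $\mathbb{K}$-scalar multiplication. Additivity is immediate from $\sigma$ being a ring homomorphism. For scalar multiplication, given $\lambda \in \mathbb{K}$ and $\sigma(w) \in \sigma(W)$, I write $\lambda \sigma(w) = \sigma(\sigma^{-1}(\lambda) w)$, which lies in $\sigma(W)$ because $\sigma^{-1}(\lambda) w \in W$. Next, for $\varphi^\mathbb{K}$-invariance, for every $h \in H$ and $w \in W$ I use the observation above to write $\varphi(h)\sigma(w) = \sigma(\varphi(h) w)$, and this lies in $\sigma(W)$ since $\varphi(h)w \in W$ by invariance of $W$.

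Finally, I would prove irreducibility by contradiction, transporting any proper subrepresentation back through $\sigma^{-1}$. Suppose $U \subsetneq \sigma(W)$ is a nonzero $\varphi^\mathbb{K}$-invariant $\mathbb{K}$-subspace. Applying the same arguments as above to $\sigma^{-1}$ (which is also a field automorphism of $\mathbb{K}$) shows that $\sigma^{-1}(U)$ is a nonzero proper $\varphi^\mathbb{K}$-invariant $\mathbb{K}$-subspace of $W$, contradicting the irreducibility of $W$. Hence $\sigma(W)$ is $\varphi^\mathbb{K}$-irreducible.

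There is no real obstacle here: the proof is essentially a bookkeeping exercise about how $\sigma$ interacts with the $\mathbb{K}$-linear structure and with matrices whose entries lie in the fixed field $\mathbb{Q}$ of $\sigma$. The only subtlety worth highlighting is the $\mathbb{K}$-linearity point above, where one must use $\sigma^{-1}(\lambda)$ rather than $\lambda$ to pull scalar multiplication back into $W$; this is precisely why it matters that $\sigma$ is a field automorphism and not merely a ring embedding.
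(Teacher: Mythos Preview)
Your proof is correct and follows essentially the same approach as the paper: both use that $\sigma$ commutes with rational matrices to show $\sigma(W)$ is $\varphi^{\mathbb{K}}$-invariant, and then transport a hypothetical proper invariant subspace back via $\sigma^{-1}$ to contradict irreducibility of $W$. The only difference is that you explicitly verify $\sigma(W)$ is a $\mathbb{K}$-subspace (in particular the scalar multiplication step via $\lambda\sigma(w)=\sigma(\sigma^{-1}(\lambda)w)$), a point the paper takes for granted.
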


	\begin{proof}
		Let $w\in W$. By definition of being $\varphi^\mathbb{K}$-invariant, we know that
		$$\forall h\in H: \varphi^\mathbb{K}(h)(w) \in W.$$
		Applying $\sigma$ to this expression and using that $\varphi^\mathbb{K}(h)$ is a rational matrix, we obtain
		$$\forall h \in H: \varphi^\mathbb{K}(h)(\sigma(w)) \in \sigma(W).$$
		This shows that $\sigma(W)$ is also $\varphi^\mathbb{K}$-invariant.
		
		Now suppose $\sigma(W)$ contains a strict subspace $W'$ that is invariant:
		$$\forall w\in W', \forall h \in H: \varphi^\mathbb{K}(h)(w) \in W'.$$
		Now apply $\sigma^{-1}$ to this expression to find that
		$$\forall w\in W', \forall h \in H: \varphi^\mathbb{K}(h)(\sigma^{-1}(w)) \in \sigma^{-1}(W').$$
		This shows that $\sigma^{-1}(W')$ is an invariant subspace. However, this is a strict subspace of $W$. Therefore, $\sigma^{-1}(W') = \{0\}$ and $W' = \{0\}$. This shows that $\sigma(W)$ is irreducible.
	\end{proof}
This leads to the following result describing how $\varphi$ splits over a splitting field.

	\begin{thm}
		\label{thm_notation}
		Let $\varphi: H\to \GL(m, \Z)$ be a $\mathbb{Q}$-irreducible representation of a finite group. Suppose it decomposes into absolutely irreducible components over a Galois extension $\mathbb{K}$ of $\mathbb{Q}$. Suppose that $W$ is an irreducible $\mathbb{K}$-subspace which is minimal over $\mathbb{F} \subset \mathbb{K}$. If $\sigma_1$ up to $\sigma_n$ denote the $n$ automorphisms of $\mathbb{K}$ distinct on $\mathbb{F}$, i.e. the automorphisms such that $\sigma_i\big|_\mathbb{F} \neq \sigma_j\big|_\mathbb{F}$, then $$\mathbb{K}^m = \bigoplus_{i=1}^n\sigma_i(W)$$ is a decomposition of $\mathbb{K}^m$ into irreducible subspaces.
		
	\end{thm}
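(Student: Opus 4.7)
The plan is to verify the three claims implicit in the statement: irreducibility of each $\sigma_i(W)$, pairwise distinctness plus directness of the sum, and exhaustion of $\mathbb{K}^m$. The main engine for all three will be the standard Galois descent for subspaces of $\mathbb{K}^m$: a $\mathbb{K}$-subspace has a basis over a subfield $\mathbb{L} \subseteq \mathbb{K}$ if and only if it is set-stabilized by every $\sigma \in \Gal(\mathbb{K}/\mathbb{L})$, and a Galois-stable $\mathbb{K}$-subspace of $\mathbb{K}^m$ descends to a $\mathbb{Q}$-subspace of $\mathbb{Q}^m$.

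Irreducibility of each $\sigma_i(W)$ is an immediate application of Lemma \ref{lem_sigmaW_irr}, since each $\sigma_i$ is an automorphism of $\mathbb{K}$ and $\varphi$ is already defined over $\mathbb{Q}$. For distinctness: if $\sigma_i(W) = \sigma_j(W)$, then $\sigma_j^{-1}\sigma_i$ stabilizes $W$, so by descent $W$ would have a basis in the fixed field of $\sigma_j^{-1}\sigma_i$; the minimality of $W$ over $\mathbb{F}$ then forces this fixed field to contain $\mathbb{F}$, i.e.\ $\sigma_i\vert_\mathbb{F} = \sigma_j\vert_\mathbb{F}$, contradicting the choice of the $\sigma_i$'s.

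Next, for exhaustion, I would set $V = \sum_{i=1}^n \sigma_i(W)$ and argue that $V = \mathbb{K}^m$. The subspace $V$ is $\varphi^\mathbb{K}$-invariant as a sum of invariant subspaces, and it is also Galois-stable: for any $\tau \in \Gal(\mathbb{K}/\mathbb{Q})$ the restriction $\tau\sigma_i\vert_\mathbb{F}$ agrees with some $\sigma_{j(i)}\vert_\mathbb{F}$, and the same descent argument then gives $\tau(\sigma_i(W)) = \sigma_{j(i)}(W)$, so $\tau$ permutes the summands. Galois descent yields $V = \mathbb{K}\cdot V_\mathbb{Q}$ for a nonzero $\varphi$-invariant subspace $V_\mathbb{Q} \subseteq \mathbb{Q}^m$; the $\mathbb{Q}$-irreducibility of $\varphi$ then forces $V_\mathbb{Q} = \mathbb{Q}^m$, and hence $V = \mathbb{K}^m$.

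Directness will then follow once one knows $n \dim_\mathbb{K} W = m$: the surjection $\bigoplus_i \sigma_i(W) \twoheadrightarrow V = \mathbb{K}^m$ immediately gives $n \dim_\mathbb{K} W \geq m$, and the reverse inequality is the main obstacle I expect. I plan to obtain it through isotypic considerations in the semisimple $\mathbb{K}[H]$-module $\mathbb{K}^m$: the $\mathbb{Q}$-irreducibility of $\varphi$ forces $\Gal(\mathbb{K}/\mathbb{Q})$ to act transitively on the isomorphism classes of irreducible constituents of $\varphi^\mathbb{K}$ with equal multiplicity, and combined with the distinctness above this gives both the dimension equality and the directness of the sum. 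The delicate bookkeeping between the stabilizers of $W$ and the subfield $\mathbb{F}$ under the descent dictionary is where I expect the argument to require the most care.
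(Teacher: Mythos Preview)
Your exhaustion argument is exactly the paper's: set $V=\sum_i\sigma_i(W)$, observe that $V$ is $\varphi^{\mathbb{K}}$-invariant and $\Gal(\mathbb{K}/\mathbb{Q})$-stable (because any $\tau$ permutes the $\sigma_i(W)$), apply Galois descent to get a nonzero $\varphi$-invariant $V_{\mathbb{Q}}\subset\mathbb{Q}^m$, and conclude $V_{\mathbb{Q}}=\mathbb{Q}^m$ by $\mathbb{Q}$-irreducibility. The paper then finishes in one line: it \emph{cites} \cite[Theorem~3.1]{dekimpe2015existence} for the dimension identity $m=nk$, so that $\mathbb{K}^m=\sum_i\sigma_i(W)$ together with $\sum_i\dim\sigma_i(W)=nk=m$ forces the sum to be direct. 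Your separate distinctness argument via the stabilizer of $W$ and minimality of $\mathbb{F}$ is correct, but it becomes redundant once directness is known.

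Where you diverge from the paper is in trying to establish $m=nk$ internally, and the sketch you give does not close the gap. Galois transitivity on isomorphism classes of constituents only yields $m=r\mu k$, with $r$ the number of classes and $\mu$ the common multiplicity. Your distinctness of the $\sigma_i(W)$ as \emph{subspaces} says nothing about $r$ or $\mu$: when $\mu>1$ one can have many distinct irreducible subspaces all isomorphic to each other, so ``distinct'' does not give $n\le r$ or any bound of the right shape. What is actually needed is $n=[\mathbb{F}:\mathbb{Q}]=r\mu$, tying the minimal field of definition of the particular subspace $W$ to the Schur index of the constituent; this is precisely the content outsourced to the cited reference, and it is not a formal consequence of the semisimple decomposition plus subspace distinctness. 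You were right to flag this as the delicate step, but the isotypic bookkeeping you outline does not supply it.
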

	\begin{rem}
		Note that the automorphisms $\sigma_1$ to $\sigma_n$ can be identified with representatives of the cosets of $\Gal(\mathbb{K}/\mathbb{F})$ in $\Gal(\mathbb{K}/\mathbb{Q})$. Indeed, $\sigma_i(x) = \sigma_j(x)$ for all $x\in\mathbb{F}$ if and only if $\sigma_i\Gal(\mathbb{K}/\mathbb{F}) = \sigma_j\Gal(\mathbb{K}/\mathbb{F})$. As a consequence, if $\sigma$ is an automorphism of $\mathbb{K}$, then it induces a permutation on the cosets, implying that $\sigma\circ\sigma_i = \sigma_j\circ\sigma'$ for some $1\leq i,j\leq n$ and $\sigma'\in \Gal(\mathbb{K}/\mathbb{F})$. Since $W$ has a basis over $\mathbb{F}$, $\sigma'(W) = W$. Therefore, $\sigma$ induces a permutation on $\{\sigma_i(W)\mid 1\leq i\leq n\}$.
	\end{rem}
	\begin{proof}
		In \cite[theorem 3.1]{dekimpe2015existence} a similar statement is shown, from which we already know that if $\dim(W) = k$, then $m = kn$. However, the result does not specify the direct sum along which this decomposition holds, which is crucial for our purposes.
				
		Consider the subspace $V = \sigma_1(W) + \dots + \sigma_n(W)$. We will argue that this is in fact a direct sum with $V = \mathbb{K}^m$. 
		
		The $\mathbb{K}$-vector space $V$ is invariant under $\varphi^\mathbb{K}$, being the sum of invariant subspaces. Also, if $\sigma \in \Gal(\mathbb{K})$ and $\sum_{i=1}^n \sigma_i(w_i)$ is an arbitrary element of $V$, then
		$\sigma(\sum_{i=1}^n \sigma_i(w_i))$
		is still an element of $V$, as $\sigma$ permutes $\{\sigma_i(W)\mid 1\leq i \leq n\}$, so $\sigma(\sigma_i(w_i)) = \sigma_j(w'_i)$ for some $1\leq j \leq n$ and $w'_i \in W$. We conclude that $\sigma(V) = V$ for all $\sigma \in \Gal(\mathbb{K})$.
		
		Let $U$ denote the set $\{v \in V\mid \forall \sigma \in \Gal(\mathbb{K}): \sigma(v) = v\}$ of vectors in $V$ fixed under $\Gal(\mathbb{K})$. By the theory of Galois Descent, see \cite[Theorem 3.2.5]{Winter1974Fields}, $U$ is a $\mathbb{Q}$-vector subspace of $\mathbb{Q}^m$ for which $U\otimes_\mathbb{Q} \mathbb{K} = V$. (In particular, $V$ has a basis with entries over $\mathbb{Q}$.) However, $U$ is now a $\varphi^\mathbb{Q}$-invariant subspace. As $\varphi^\mathbb{Q}$ is irreducible, this implies that $U = \mathbb{Q}^m$ and $V = \mathbb{K}^m$.
		
		In conclusion, we obtain
		$$\mathbb{K}^m = \sigma_1(W) + \dots + \sigma_n(W).$$
		Thus, comparing the dimensions of both sides, this must be a direct sum.
	\end{proof}
Now let $B$ be a rational matrix that commutes with $\varphi: H \to \GL(m,\Z)$. Take $\mathbb{K}$ a number field such that $\varphi$ decomposes into absolutely irreducible components and $B$ can be put in its Jordan normal form over $\mathbb{K}$. We may assume that $\mathbb{K}$ is Galois over $\mathbb{Q}$.
	\begin{lemma} \label{lem_eigenvalue}
		Let $\varphi: H \to \GL(m, \Z)$ and let $B\in \GL(m, \mathbb{Q})$ commute with $\varphi$. There exists an absolutely irreducible $\mathbb{K}$-subspace $W$ of $\varphi^\mathbb{K}$ contained in an eigenspace of $B$. 
	\end{lemma}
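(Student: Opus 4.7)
The idea is simply that any eigenspace of $B$, regarded as a subspace of $\mathbb{K}^{m}$, is automatically $\varphi^{\mathbb{K}}$-invariant because $B$ commutes with every $\varphi(h)$; so my plan is to pick an eigenvalue of $B$ and then break up its eigenspace into absolutely irreducible subrepresentations.

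Concretely, I would proceed in two steps. First, use the assumption that $B$ admits a Jordan normal form over $\mathbb{K}$ to pick an eigenvalue $\lambda \in \mathbb{K}$, and set $E_{\lambda} := \ker(B - \lambda \cdot \mathrm{Id}) \subset \mathbb{K}^{m}$, which is a nonzero subspace. The commutation relation then immediately gives $\varphi^{\mathbb{K}}$-invariance: for $v \in E_{\lambda}$ and $h \in H$, one has
$$B(\varphi(h) v) = \varphi(h) B v = \lambda \varphi(h) v,$$
so $\varphi(h) v \in E_{\lambda}$. Second, invoke Maschke's theorem: because $H$ is finite and $\mathbb{K}$ has characteristic zero, the representation $\varphi^{\mathbb{K}}$ is completely reducible, and hence the invariant subspace $E_{\lambda}$ splits as a direct sum of irreducible $\mathbb{K}$-subrepresentations. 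Since $\mathbb{K}$ is, by construction, a splitting field for $\varphi$, each of these irreducible summands is in fact absolutely irreducible (this is precisely Theorem \ref{thm_splitting_field}), and selecting any one of them produces the desired subspace $W$, which by construction sits inside the eigenspace $E_{\lambda}$ of $B$.

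I do not foresee any genuine obstacle. The only subtlety worth highlighting is that the whole argument must take place over $\mathbb{K}$ rather than $\mathbb{Q}$: over $\mathbb{Q}$ the matrix $B$ need not have any eigenvalue at all, and $\varphi$ is $\mathbb{Q}$-irreducible anyway, so the eigenspace argument would collapse into the trivial statement. Passage to the splitting field $\mathbb{K}$ is exactly what produces both a usable eigenspace and the right notion of \emph{absolute} irreducibility for the summand one finally extracts.
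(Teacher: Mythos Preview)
Your proposal is correct and follows essentially the same approach as the paper's proof. The only cosmetic difference is that the paper starts from a single eigenvector $v$, forms the invariant span $V = \mathrm{span}_{\mathbb{K}}\{\varphi^{\mathbb{K}}(h)v \mid h \in H\}$, and observes that $V$ lies inside the $\lambda$-eigenspace, whereas you work directly with the full eigenspace $E_{\lambda}$ and show it is $\varphi^{\mathbb{K}}$-invariant; both arguments then extract an (absolutely) irreducible summand, so the content is identical. One small quibble: the fact that irreducible $\mathbb{K}$-subrepresentations are absolutely irreducible is not Theorem~\ref{thm_splitting_field} per se but rather the standing hypothesis on $\mathbb{K}$ made just before the lemma.
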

	\begin{proof}
		By the Jordan decomposition of $B$ over $\mathbb{K}$, we know there exists at least one eigenvector $v$ for some eigenvalue $\lambda$. Consider 
		$$V =\text{span}_\mathbb{K}\{\varphi^\mathbb{K}(h)(v)\mid h \in H\}.$$
		This subspace is clearly $\varphi^\mathbb{K}$-invariant. Furthermore, we have
		$$B(\varphi^\mathbb{K}(h)v) = \varphi^\mathbb{K}(h)Bv = \lambda\varphi^\mathbb{K}(h)v$$
		for all basis vectors. Hence $V$ is in fact contained in the eigenspace of $\lambda$.\\
		Since $V$ is $\varphi^\mathbb{K}$-invariant, it contains an absolutely irreducible $\mathbb{K}$-subspace $W$. This ends the proof.
	\end{proof}

		Take the $\mathbb{K}$-vector subspace $W \subset \mathbb{K}^m$ as in the previous lemma. Suppose it is minimal over the field $\mathbb{F}$. Then $\lambda \in \mathbb{F}$, and for all $\sigma_i$ in the direct sum $\mathbb{K}^m = \oplus_{i=1}^n \sigma_i(W)$, we have
		$$\forall w\in W: B\sigma_i(w) = \sigma_i(Bw) = \sigma_i(\lambda w) = \sigma_i(\lambda)\sigma_i(w).$$ 

	We get the following result:
	\begin{prop} \label{prop_irr_Q_basis}
		Using the notation as above, if $B$ commutes with a $\mathbb{Q}$-irreducible representation $\varphi$, then it is of the form
		\begin{equation*}
			B \sim_\mathbb{K} \begin{pmatrix}  \sigma_1(\lambda)\mathbb{1}_k & 0 & \dots & 0\\ 0 & \sigma_2(\lambda)\mathbb{1}_k & &\\ \vdots & & \ddots &\\0 & 0 & \dots & \sigma_n(\lambda)\mathbb{1}_k \end{pmatrix}
		\end{equation*}
		with respect to a basis along the direct sum $\oplus_{i=1}^n \sigma_i(W)$.
	\end{prop}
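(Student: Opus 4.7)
The plan is to exploit the decomposition from Theorem \ref{thm_notation} together with the fact that $B$, having rational entries, commutes with the action of any $\sigma \in \Gal(\mathbb{K}/\mathbb{Q})$ on $\mathbb{K}^m$. By Lemma \ref{lem_eigenvalue}, there is an absolutely irreducible $\mathbb{K}$-subspace $W \subseteq \mathbb{K}^m$ contained in an eigenspace of $B$ with some eigenvalue $\lambda \in \mathbb{K}$. Since $W$ is minimal over $\mathbb{F}$ by hypothesis, Theorem \ref{thm_notation} provides the decomposition $\mathbb{K}^m = \bigoplus_{i=1}^{n} \sigma_i(W)$, where the $\sigma_i$ are representatives of the cosets of $\Gal(\mathbb{K}/\mathbb{F})$ in $\Gal(\mathbb{K}/\mathbb{Q})$.

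The key computation is the one already recorded just before the statement of the proposition: for any $w \in W$, using that $B$ has rational (in particular, $\sigma_i$-fixed) entries, we have
$$B \sigma_i(w) = \sigma_i(B w) = \sigma_i(\lambda w) = \sigma_i(\lambda)\, \sigma_i(w).$$
Hence $B$ acts on the subspace $\sigma_i(W)$ as multiplication by the scalar $\sigma_i(\lambda)$.

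To read off the claimed matrix form, I would fix a $\mathbb{K}$-basis $(w_1,\dots,w_k)$ of $W$, where $k = \dim_\mathbb{K} W$, and set $v_{i,j} := \sigma_i(w_j)$. Since $\sigma_i$ is a field automorphism of $\mathbb{K}$, a dependence $\sum_j a_j v_{i,j} = 0$ yields $\sum_j \sigma_i^{-1}(a_j) w_j = 0$ after applying $\sigma_i^{-1}$, forcing $a_j = 0$; so $(v_{i,1}, \dots, v_{i,k})$ is a $\mathbb{K}$-basis of $\sigma_i(W)$. Concatenating these bases in the order $i = 1, \dots, n$ yields a $\mathbb{K}$-basis of $\mathbb{K}^m$ adapted to the direct sum. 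In this basis, the displayed computation shows that $B$ preserves each block $\sigma_i(W)$ and acts there as $\sigma_i(\lambda)$ times the identity, giving exactly the block-diagonal form stated in the proposition.

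The proof is essentially formal once Theorem \ref{thm_notation} and Lemma \ref{lem_eigenvalue} are in hand; there is no serious obstacle. The only point one must keep an eye on is to verify that the matrix with rational entries really commutes with the Galois action entry-by-entry (so that $\sigma_i(Bw) = B\sigma_i(w)$), and to confirm that applying $\sigma_i$ to a basis of $W$ produces a basis of $\sigma_i(W)$; both are routine.
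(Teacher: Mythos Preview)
Your proof is correct and follows exactly the same approach as the paper: the paper's argument is precisely the one-line computation $B\sigma_i(w) = \sigma_i(Bw) = \sigma_i(\lambda w) = \sigma_i(\lambda)\sigma_i(w)$ recorded just before the proposition, together with the decomposition from Theorem~\ref{thm_notation}. You have supplied a bit more detail than the paper (the explicit verification that $\sigma_i$ carries a basis of $W$ to a basis of $\sigma_i(W)$), but the substance is identical.
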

	\begin{ex} \label{ex_commuting}
		The quaternion group $Q = \{\pm1, \pm i, \pm j, \pm k\}$ of order $8$ has a faithful group representation $\varphi: Q \to \GL(4, \Z)$ given by
		\begin{align*} \varphi(i) &= \begin{pmatrix}0&1&0&0\\-1&0&0&0\\0&0&0&-1\\0&0&1&0\end{pmatrix}, \\ \varphi(j) &= \begin{pmatrix}0&0&1&0\\0&0&0&1\\-1&0&0&0\\0&-1&0&0\end{pmatrix}, \\ \varphi(k) &= \begin{pmatrix}0&0&0&1\\0&0&-1&0\\0&1&0&0\\-1&0&0&0\end{pmatrix}.		\end{align*}
		The following matrix commutes with every element of $\varphi(Q)$:
		$$ B = \begin{pmatrix}1&-1&-2&0\\1&1&0&2\\2&0&1&-1\\0&-2&1&1\end{pmatrix}.$$
		A minimal splitting field for both $B$ and $\varphi$ is $K = \mathbb{Q}(\sqrt{5}i)$ with $\Gal(K,\Q) = \{\sigma_1, \sigma_2\}$ and $\sigma_1(x) = x$ and $\sigma_2(x) = \bar{x}$, where $\bar{x}$ denotes the complex conjugation. With respect to the basis given by $\left\{\sigma_1(w_1), \sigma_1(w_2), \sigma_2(w_1), \sigma_2(w_2) \right\}$ with
		$$ w_1 = \begin{pmatrix} -\sqrt{5}i\\1\\2\\0\end{pmatrix}\text{ and }w_2 = \begin{pmatrix} 1\\\sqrt{5}i\\0\\2\end{pmatrix}$$
		we obtain the following matrices:
		$$B \sim_\mathbb{C} \begin{pmatrix}1-\sqrt{5}i&0&0&0\\0&1-\sqrt{5}i&0&0\\0&0&1+\sqrt{5}i&0\\0&0&0&1+\sqrt{5}i\end{pmatrix},$$
		$$\varphi(i), \varphi(j), \varphi(k) \sim_\mathbb{C} \begin{pmatrix}0&-1&0&0\\1&0&0&0\\0&0&0&1\\0&0&-1&0\end{pmatrix}, \, \begin{pmatrix}\frac{\sqrt{5}i}{2}&-\frac{1}{2}&0&0\\-\frac{1}{2}&-\frac{\sqrt{5}i}{2}&0&0\\0&0&-\frac{\sqrt{5}i}{2}&-\frac{1}{2}\\0&0&-\frac{1}{2}&\frac{\sqrt{5}i}{2}\end{pmatrix}, \,  \begin{pmatrix}\frac{1}{2}&\frac{\sqrt{5}i}{2}&0&0\\ \frac{\sqrt{5}i}{2}&-\frac{1}{2}&0&0\\0&0&\frac{1}{2}&-\frac{\sqrt{5}i}{2}\\0&0&-\frac{\sqrt{5}i}{2}&-\frac{1}{2}\end{pmatrix}.
$$
		
	\end{ex}
	\begin{lemma} \label{lem_copy_mat}
	
		Suppose $B$ is an integral matrix of full rank that commutes with the $\mathbb{Q}$-irreducible representation $\varphi$ of dimension $m$. If we take notations as in Theorem \ref{thm_notation} and write $x = \prod_{1\leq i\leq n} \sigma_i(\lambda)$, then $x \in \Z$ is an integer, $\det B = x^k$ with $k$ the dimension of a $\C$-irreducible subspace of $\varphi$ and
		$$x \Z^m \subset \Im B.$$
	\end{lemma}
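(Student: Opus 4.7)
The plan is to read off all three claims from the diagonal form of $B$ provided by Proposition \ref{prop_irr_Q_basis}. Since over $\mathbb{K}$ the matrix $B$ is similar to the block-diagonal matrix with blocks $\sigma_i(\lambda)\mathbb{1}_k$, the determinant equals
$$\det B \;=\; \prod_{i=1}^n \sigma_i(\lambda)^k \;=\; \left(\prod_{i=1}^n \sigma_i(\lambda)\right)^{\!k} \;=\; x^k.$$
For $x \in \mathbb{Z}$, note that $\lambda$ is an eigenvalue of the integral matrix $B$ and hence a root of $\chi_B(t) \in \mathbb{Z}[t]$, so $\lambda$ is an algebraic integer; then each $\sigma_i(\lambda)$ is an algebraic integer, and so is their product $x$. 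By the remark after Theorem \ref{thm_notation}, every element of $\Gal(\mathbb{K}/\mathbb{Q})$ permutes the list $(\sigma_1(W),\ldots,\sigma_n(W))$, and the corresponding eigenvalues $\sigma_i(\lambda)$ are permuted in exactly the same way (since $\lambda \in \mathbb{F}$), so $x$ is Galois-invariant and therefore rational. A rational algebraic integer lies in $\mathbb{Z}$.

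For the inclusion $x\mathbb{Z}^m \subset \Im B$, the strategy is to produce an explicit $C \in M_m(\mathbb{Z})$ with $BC = x\, I$, which immediately yields $x\mathbb{Z}^m = BC\mathbb{Z}^m \subset B\mathbb{Z}^m = \Im B$. To build $C$, consider the monic polynomial
$$p(t) \;=\; \prod_{i=1}^n (t - \sigma_i(\lambda)).$$
Its coefficients are elementary symmetric functions of the algebraic integers $\sigma_i(\lambda)$ and are therefore themselves algebraic integers; the same permutation argument as above shows they are $\Gal(\mathbb{K}/\mathbb{Q})$-invariant, so $p(t) \in \mathbb{Z}[t]$, with constant term $p(0) = (-1)^n x$. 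Moreover $p$ annihilates $B$: in the block-diagonal form the $i$-th block $\sigma_i(\lambda)\mathbb{1}_k$ is sent by $p$ to $p(\sigma_i(\lambda))\mathbb{1}_k = 0$, so $p(B) = 0$.

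Dividing polynomials, $p(t) = t\, q(t) + p(0)$ with $q(t) \in \mathbb{Z}[t]$, and then $p(B) = 0$ gives $p(0)\, I = -B\, q(B)$, so that $C := (-1)^{n+1} q(B) \in M_m(\mathbb{Z})$ satisfies $BC = x\, I$, completing the proof. The only genuinely delicate point is the promotion of $p(t)$ from $\mathbb{Q}[t]$ to $\mathbb{Z}[t]$, which is precisely why one works with the Galois-orbit polynomial $p$ rather than the characteristic polynomial $\chi_B$: Cayley--Hamilton applied to $\chi_B$ would yield only the weaker inclusion $x^k \mathbb{Z}^m \subset \Im B$, losing the factor of $k$ that will matter when comparing with the later lower bound.
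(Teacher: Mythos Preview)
Your proof is correct and follows essentially the same approach as the paper: both introduce the Galois-orbit polynomial $p(t)=\prod_i(t-\sigma_i(\lambda))\in\Z[t]$, observe that it annihilates $B$, and extract from this a relation $B\cdot M = \pm x\,\mathbb{1}$ with $M$ an integral polynomial in $B$. The only difference is cosmetic: you conclude $x\Z^m\subset\Im B$ directly from $BC=xI$, whereas the paper routes the same relation through the adjugate and Smith normal form (showing $\mathrm{Adj}(B)=-x^{k-1}M$ so that the largest invariant factor divides $x$); your shortcut is cleaner and loses nothing.
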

	\begin{proof}
		Let $f(X)$ be the polynomial
		$$ \prod_{1\leq i \leq n} (X - \sigma_i(\lambda)) = \sum_{i=0}^n a_i X^i.$$
		If $\mathbb{K}$ is the Galois closure of the field $\mathbb{F}$, then we see that all $\sigma \in \Gal(\mathbb{K}/\mathbb{Q})$ permute the roots of $f(X)$. In particular, we see that $\sigma(f(X)) = f(X)$, so $f(X)$ is a rational polynomial. In fact, it is integral, since all roots are algebraic numbers, because they are roots of the characteristic polynomial of $B$, which is $f(X)^k$. The number $x$ equals $a_0$. Note also that $a_n$ equals $1$.
		
		It is left to prove that $x \Z^m \subset \Im B$. We will argue that the largest possible order of an element in the quotient group $\Z^m/\Im B$ is divisible by $x$. The largest possible order is precisely the largest invariant factor of the Smith-normal form decomposition of $B$. At the other hand, it is known that this equals $(\det A)/D$, where $D$ is the greatest common divisor of the minors. Hence, we need to prove that all minors are multiples of $x^{k-1}$.
	
		Recall that the minors are the entries of the adjugate matrix $\text{Adj}(B)$.	Set $M = \sum_{i=1}^n a_i B^{i-1}$. This is clearly an integral matrix. We find
		$$BM = MB \sim_\mathbb{C} \begin{pmatrix}  (\sum_{i=1}^n a_i \sigma_1(\lambda)^{i-1})\sigma_1(\lambda)\mathbb{1}_k & 0 & \dots \\ 0 & (\sum_{i=1}^n a_i \sigma_2(\lambda)^{i-1})\sigma_2(\lambda)\mathbb{1}_k & \dots \\ \vdots & \vdots & \ddots \end{pmatrix}.$$ 
		Note that $(\sum_{i=1}^n a_i \sigma_1(\lambda)^{i-1})\sigma_1(\lambda) = \sum_{i=1}^n a_i \sigma_1(\lambda)^{i} = -a_0 = -x$, since $\sigma_1(\lambda)$ is a root of $f(X)$.	We have thus found that
		$$ BM = MB = -x \mathbb{1},$$
		so $B^{-1} = -\frac{1}{x}M$ and hence
		$$ \text{Adj}(B) = (\det B)B^{-1} = x^k (-\frac{1}{x}M) = -x^{k-1}M.$$
		This ends the proof. 
	\end{proof}
	
	\subsection{Reduction to Commuting Matrices}
Any finite index subgroup of $\Z^m$ is equal to $\Im B := B(\Z^m)$ for some matrix $B \in \Z^{m\times m}$ which is invertible over $\Q$, i.e.~$\det(B) \neq 0$. If $\varphi: H \to \GL(m,\Z)$ is an integer representation, then the set $\Inv(\varphi)$ consists of the images $\Im B$ such that $\Im \varphi(h)B = \Im B$ for all $h \in H$. In the special case when $B$ commutes with $\varphi(h)$, then surely $\Im \varphi(h)B = \Im B\varphi(h) = \Im B$, because $\varphi(h) \in \GL(m, \Z)$. Hence, commuting gives a stronger, yet not equivalent, notion than invariant subgroups.
	
	The goal of this section is to show that we may replace $\Inv(\varphi)$ by those subgroups which come from commuting matrices. In the proof of the lower bound, we will use this in order to apply lemma \ref{lem_copy_mat}.
	\begin{df}
		Let $\Com(\varphi)$ be the set $\left\{\Im B \in \Inv(\varphi)\mid \forall h\in H: B\varphi(h)=\varphi(h)B \right\}$.
	\end{df}
	\begin{ex}
		Continuing on Example \ref{ex_commuting}, the matrix $B$ induces a subgroup $\Im B$ in $\Com(\varphi)$, where $\varphi$ is the action of the quaternions. Note that applying elementary column operations on $B$ (over the ring $\Z$) does not change the subgroup $\Im B$. However, the new matrix does not have to commute with $\varphi$ anymore.
		
A bit more work shows that the corresponding subgroups $\Com(\varphi)$ and $\Inv(\varphi)$ are also different here. Indeed, consider
		$$\Im \begin{pmatrix} 1&0&0&0\\0&1&0&0\\0&0&1&0\\1&1&1&2 \end{pmatrix}.$$
		One can verify that this subgroup is invariant under $\varphi$. However, the determinant of the matrix is two, and so it does not lie in $\Com(\varphi)$, as lemma \ref{lem_copy_mat} shows that the determinant should be a proper square then.  
	\end{ex}
	\begin{lemma}
Let $\varphi: H \to \GL(m,\Z)$ be an integral representation and $\Im B$ is $\varphi$-invariant subgroup with $B \in \Z^{m \times m}$ and $\det B \neq 0$. Then there exists an integral representation $\bar{\varphi}: H \to \GL(m, \Z)$ such that for all $h\in H$: $\varphi(h)B = B\bar{\varphi}(h)$. In particular, $\varphi$ and $\bar{\varphi}$ are $\mathbb{Q}$-equivalent integral representations.
	\end{lemma}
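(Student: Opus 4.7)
The plan is to define $\bar{\varphi}$ via conjugation by $B$, namely $\bar{\varphi}(h) = B^{-1} \varphi(h) B$, and then show that the resulting matrices are in fact integral despite $B^{-1}$ generally having only rational entries. Rationality of the conjugate is automatic, so the work lies entirely in showing that the columns of $B^{-1} \varphi(h) B$ lie in $\Z^m$, and this is where the hypothesis that $\Im B$ is $\varphi$-invariant enters.

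First I would unpack the invariance assumption columnwise. The columns of $B$ form a $\Z$-basis of the subgroup $\Im B$, so for each standard basis vector $e_i$ and each $h \in H$, the vector $\varphi(h) B e_i$ lies in $\varphi(h)(\Im B) \subseteq \Im B$. Since $\det B \neq 0$, the map $B: \Z^m \to \Im B$ is a bijection, so there exists a unique vector $v_i(h) \in \Z^m$ with $\varphi(h) B e_i = B v_i(h)$. Define $\bar{\varphi}(h)$ to be the integer matrix whose $i$-th column is $v_i(h)$. By construction, $\varphi(h) B = B \bar{\varphi}(h)$, and uniqueness forces $\bar{\varphi}(h) = B^{-1} \varphi(h) B$ as rational matrices.

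Next I would check the homomorphism property and the invertibility over $\Z$. For the first, multiplying the defining equation for $h_1$ on the right by $\bar{\varphi}(h_2)$ and then substituting the defining equation for $h_2$ gives $\varphi(h_1)\varphi(h_2) B = B \bar{\varphi}(h_1)\bar{\varphi}(h_2)$; comparing with $\varphi(h_1 h_2) B = B \bar{\varphi}(h_1 h_2)$ and cancelling $B$ (injective on $\Q^m$) yields $\bar{\varphi}(h_1 h_2) = \bar{\varphi}(h_1)\bar{\varphi}(h_2)$. For invertibility, applying the above construction to $h^{-1}$ produces an integral matrix $\bar{\varphi}(h^{-1})$, and by the homomorphism property this is a two-sided inverse of $\bar{\varphi}(h)$, so $\bar{\varphi}(h) \in \GL(m,\Z)$. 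Alternatively one observes $\det \bar{\varphi}(h) = \det \varphi(h) = \pm 1$, giving the same conclusion.

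The final sentence is then immediate: the identity $\bar{\varphi}(h) = B^{-1} \varphi(h) B$ with $B \in \GL(m, \Q)$ exhibits $\varphi$ and $\bar{\varphi}$ as conjugate, hence $\Q$-equivalent, integral representations. No step is a genuine obstacle; the only point requiring care is the uniqueness of the lift $v_i(h)$, which crucially relies on $\det B \neq 0$ so that $B$ is injective on $\Q^m$ (and therefore on $\Z^m$), and this is what lets us conclude that $\bar{\varphi}(h)$ is a well-defined integer matrix rather than just an integer matrix up to some ambiguity.
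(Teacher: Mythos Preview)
Your proof is correct and follows essentially the same route as the paper: both define $\bar{\varphi}(h) = B^{-1}\varphi(h)B$ and deduce integrality columnwise from the fact that $\varphi(h)Be_i \in \Im B$ forces $\varphi(h)Be_i = Bu_i$ for some $u_i \in \Z^m$. You additionally spell out the homomorphism property and $\GL(m,\Z)$ membership, which the paper leaves implicit, but the argument is the same.
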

	\begin{proof}
		Define $\bar{\varphi}(h) = B^{-1}\varphi(h)B$. This clearly defines an $\mathbb{Q}$-equivalent representation. It suffices to argue that $\bar{\varphi}$ is an integral representation.\\
		Take $h\in H$ arbitrary. We argue that $\bar{\varphi}(h)$ is integral. Since $\Im B$ is $\varphi$-invariant, we know that for all standard vectors $e_i$, we find an integral vector $u_i$ such that 
		$$\varphi(h) Be_i = Bu_i.$$
		Note that $Be_i$ is the $i$-th column of $B$, say $B_i$. In other words, we have $\varphi(h)B_i = Bu_i$. As a consequence, we have
		$$\varphi(h) B = B \begin{pmatrix}u_1&\dots&u_m\end{pmatrix},$$
		so $\bar{\varphi}(h) = \begin{pmatrix}u_1&\dots&u_m\end{pmatrix} \in \Z^{m\times m}$.
	\end{proof}
	By \cite[p. 559]{curtis2006representation}, we have the following result:
	\begin{thm}[(Corollary of) Jordan-Zassenhaus Theorem]
		Given an integral representation $\varphi: H \to \GL(m, \Z)$. The set $\left\{\tilde{\varphi}: H \to \GL(m, \Z) \mid \varphi \sim_\mathbb{Q} \tilde{\varphi} \right\}$ of all integral representations $\mathbb{Q}$-equivalent to $\varphi$ splits in a finite number of equivalence classes under $\Z$-equivalence.
	\end{thm}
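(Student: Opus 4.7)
The plan is to rephrase the statement in the language of $\Z H$-lattices inside a fixed $\Q H$-module, then invoke the classical Jordan-Zassenhaus finiteness for lattices over an order in a semisimple $\Q$-algebra.

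First I would reformulate. An integral representation $\varphi: H \to \GL(m, \Z)$ is the same datum as a $\Z H$-module structure on $\Z^m$, and two such representations are $\Z$-equivalent precisely when the underlying $\Z H$-modules are isomorphic, the intertwining change-of-basis matrix being the required element of $\GL(m,\Z)$. Extending scalars to $\Q$, two representations are $\Q$-equivalent exactly when the induced $\Q H$-modules $\Q^m$ are isomorphic. So if $V$ is a fixed $\Q H$-module of $\Q$-dimension $m$, the statement is equivalent to: there exist only finitely many isomorphism classes of full-rank $\Z H$-lattices $L \subset V$.

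Next I would replace $\Z H$ by a maximal $\Z$-order. Choose a maximal order $\Gamma \subset \Q H$ containing $\Z H$; the index $[\Gamma : \Z H]$ is finite because $\Q H$ is semisimple and $\Z H$ has finite $\Z$-rank. For any $\Z H$-lattice $L \subset V$, the $\Gamma$-submodule $\Gamma L$ contains $L$ with index bounded in terms of $[\Gamma : \Z H]$. A standard sandwich argument then shows that each $\Gamma$-isomorphism class of full-rank lattices in $V$ contains only finitely many $\Z H$-isomorphism classes, so it is enough to bound the number of $\Gamma$-lattice classes in $V$.

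The heart of the matter, which I expect to be the main obstacle, is the finiteness for maximal orders in simple algebras. Using the Wedderburn decomposition, $\Q H = \prod_i A_i$ with $A_i \cong M_{n_i}(D_i)$ for division $\Q$-algebras $D_i$, and $\Gamma$ factors compatibly as $\prod_i \Gamma_i$. The module $V$ decomposes into isotypic components, and Morita equivalence reduces the classification of $\Gamma_i$-lattices of a given rank to that of lattices over a maximal order $\Delta_i$ in $D_i$. The key classical input is that such a $\Delta_i$ has only finitely many isomorphism classes of lattices of any fixed rank: for rank one this is the finiteness of the ideal class group of $\Delta_i$, proven by a Minkowski-type geometry-of-numbers argument producing a representative of bounded reduced norm in each class, and the higher-rank case then follows by stable equivalence / cancellation arguments in the style of Eichler. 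Combining this finiteness across all simple factors with the sandwich reduction yields the desired finiteness of $\Z$-equivalence classes inside a fixed $\Q$-equivalence class, completing the outline.
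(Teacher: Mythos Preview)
The paper does not prove this statement at all: it is quoted verbatim as a known result with the citation ``\cite[p.~559]{curtis2006representation}'' and is then used as a black box in the proof of Proposition~\ref{prop_commut}. So there is no ``paper's own proof'' to compare against.

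Your outline is a faithful sketch of the classical proof of the Jordan--Zassenhaus theorem as one finds it in Curtis--Reiner: reformulate integral representations as $\Z H$-lattices in a fixed $\Q H$-module, pass to a maximal order $\Gamma \supset \Z H$ using the bounded-index sandwich $L \subset \Gamma L$ to reduce to $\Gamma$-lattices, decompose via Wedderburn and Morita to lattices over maximal orders in division algebras, and finish with geometry-of-numbers finiteness of ideal classes. Two minor remarks. First, you should say explicitly that $H$ is finite so that $\Q H$ is semisimple (Maschke); the paper assumes this throughout but your write-up only uses it implicitly. Second, invoking ``Eichler cancellation'' for the higher-rank step is heavier than necessary: once rank-one finiteness is known, the higher-rank case follows directly from Steinitz-type structure theory for lattices over maximal orders (every such lattice is a direct sum of ideals, determined up to isomorphism by rank and the product ideal class), without any appeal to strong approximation. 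With those tweaks the argument is complete and goes well beyond what the paper itself supplies.
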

	We use this to prove the following:
	\begin{prop}
		\label{prop_commut}
		There exists a constant $M>0$ such that for each $B \in \Z^{m \times m}$ with $\det(B) \neq 0$ for which $\Im B \in \Inv(\varphi)$, there exists a matrix $B'\in \Com(\varphi)$ such that $\Im B' \leq \Im B$ and $0< |\det B'| \leq M |\det B|$.
	\end{prop}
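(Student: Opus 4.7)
The plan is to combine the preceding lemma with the Jordan--Zassenhaus theorem in order to reduce the construction to finitely many precomputed cases, and then to produce $B'$ by ``pushing'' $B$ through a fixed $\mathbb{Q}$-equivalence. Given $B$ with $\Im B \in \Inv(\varphi)$, the previous lemma supplies an integral representation $\bar{\varphi}$ with $\varphi(h) B = B \bar{\varphi}(h)$, and $\bar{\varphi}$ is $\mathbb{Q}$-equivalent to $\varphi$. By Jordan--Zassenhaus, the set of integral representations $\mathbb{Q}$-equivalent to $\varphi$ breaks into finitely many $\Z$-equivalence classes, with representatives $\bar{\varphi}_1,\dots,\bar{\varphi}_N$, say.

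As a one-time preparation (independent of $B$), I would fix for each $i$ a matrix $S_i \in \GL(m,\Q)$ with $\varphi(h) S_i = S_i \bar{\varphi}_i(h)$ for all $h \in H$, which exists because $\varphi \sim_\mathbb{Q} \bar{\varphi}_i$, and a positive integer $a_i$ such that $a_i S_i^{-1} \in \Z^{m \times m}$. Then set
\[
M = \max_{1 \leq i \leq N} \frac{a_i^m}{|\det S_i|},
\]
which is finite precisely because Jordan--Zassenhaus bounds the number of classes. Now for the given $B$, pick $i$ and $T \in \GL(m,\Z)$ with $\bar{\varphi}(h) = T^{-1} \bar{\varphi}_i(h) T$, and define
\[
B' \;=\; a_i\, B\, T^{-1}\, S_i^{-1}.
\]

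It remains to verify the four claims. First, $B'$ is integral because $B$ and $T^{-1}$ are integral and $a_i S_i^{-1} \in \Z^{m\times m}$. Second, $B'$ commutes with $\varphi$: post-multiplying $\varphi(h) B = B \bar{\varphi}(h) = B T^{-1} \bar{\varphi}_i(h) T$ by $T^{-1}$ gives $\varphi(h) B T^{-1} = B T^{-1} \bar{\varphi}_i(h)$, and substituting $\bar{\varphi}_i(h) = S_i^{-1} \varphi(h) S_i$ then multiplying by $S_i^{-1}$ yields $\varphi(h) B T^{-1} S_i^{-1} = B T^{-1} S_i^{-1} \varphi(h)$. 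Third, $\Im B' \subseteq \Im B$ because $B^{-1} B' = a_i T^{-1} S_i^{-1}$ is integral. Fourth, $|\det T| = 1$ so $|\det B'| = (a_i^m / |\det S_i|)\,|\det B| \leq M\,|\det B|$, and $\det B' \neq 0$ since all factors are invertible.

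The main conceptual obstacle is recognizing that Jordan--Zassenhaus is what makes the constant $M$ uniform in $B$: each individual $B$ naturally produces its own $\bar{\varphi}$ and its own conjugating data, and a priori the ``correction factor'' $a_i^m / |\det S_i|$ could grow without bound as $B$ varies. Finiteness of the $\Z$-equivalence classes among $\mathbb{Q}$-equivalent integral representations is exactly what turns this potentially unbounded supremum into a finite maximum, and the rest of the proof is a direct check that the obvious candidate $B' = a_i B T^{-1} S_i^{-1}$ has all required properties.
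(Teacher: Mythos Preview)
Your proof is correct and follows essentially the same approach as the paper. The paper fixes integral intertwiners $A_i$ satisfying $\varphi_i(h)A_i = A_i\varphi(h)$ (after scaling to clear denominators) and takes $B' = B P A_i$; your $a_i S_i^{-1}$ plays exactly the role of the paper's $A_i$ and your $T^{-1}$ the role of their $P$, so the two constructions are the same up to the direction in which the intertwiners are written.
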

	\begin{proof}
		By the Jordan-Zassenhaus Theorem, we find a finite number of representatives of the $\Z$-equivalence classes of the set $\left\{\tilde{\varphi}: H \to \GL(m, \Z) \mid \varphi \sim_\mathbb{Q} \tilde{\varphi} \right\}$, say $\varphi_1$ up to $\varphi_k$. We also fix the matrices $A_i \in \GL(m, \mathbb{Q})$ such that $\varphi_i(h)A_i = A_i\varphi(h)$. By multiplying the matrices $A_i$ by a well-chosen scalar matrix, we may assume that $A_i \in \Z^{m\times m}$.\\
		Take $M = \max\left\{|\det A_i|\mid 1\leq i\leq k \right\}$. Let $B$ be an arbitrary matrix for which $\Im B$ is $\varphi$-invariant. By the previous lemma, we know that there is an integral representation $\bar{\varphi}$ such that $\varphi(h)B = B\bar{\varphi}(h)$. Also, we know that $\bar{\varphi}$ lies in the $\Z$-equivalence class of some $\varphi_i$. Hence, there exists a matrix $P\in \GL(m, \Z)$ such that $\bar{\varphi}(h)P = P\varphi_i(h)$.
		We claim that the matrix $BPA_i$ is the matrix $B'$ from the proposition's statement.\\
		First, by construction $PA_i$ is an integral matrix, hence $\Im BPA_i$ is clearly a subgroup of $\Im B$. Its determinant can be estimated by
 		$$0 < |\det (BPA_i)| = |\det A_i|\cdot|\det B| \leq  M|\det B|.$$
		It is left to show that this matrix commutes with $\varphi(h)$ for every $h\in H$. We get
		$$\varphi(h)BPA_i = B\bar{\varphi}(h)PA_i = BP\varphi_i(h)A_i = BPA_i \varphi(h).$$
	\end{proof}
	\begin{thm} \label{thm_reduc_to_comm}
		The following equality holds:
		$$\RF_{\Z^m, \Inv(\varphi),B_{\Z^m}} \approx \RF_{\Z^m, \Com(\varphi),B_{\Z^m}}.$$
	\end{thm}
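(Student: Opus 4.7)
The plan is to establish the two inequalities defining $\approx$ separately; both reduce cleanly to results already proved in the excerpt.

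For $\RF_{\Z^m, \Inv(\varphi), B_{\Z^m}} \preceq \RF_{\Z^m, \Com(\varphi), B_{\Z^m}}$, I would first verify that $\Com(\varphi) \subseteq \Inv(\varphi)$. Indeed, if $B \varphi(h) = \varphi(h) B$ for every $h \in H$, then $\varphi(h)(\Im B) = \Im(\varphi(h) B) = \Im(B \varphi(h)) = \Im B$, using $\varphi(h) \in \GL(m, \Z)$ so that $\varphi(h)(\Z^m) = \Z^m$. Note also that the scalar matrices $n \mathbb{1}$ commute with $\varphi$ for every $n \geq 1$, so $\bigcap_{N \in \Com(\varphi)} N \subseteq \bigcap_{n \geq 1} n \Z^m = \{0\}$, confirming that $\RF_{\Z^m, \Com(\varphi), B_{\Z^m}}$ is well defined. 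Applying Example \ref{ex:sub} with $P = \Com(\varphi)$ and $\bar P = \Inv(\varphi)$ then yields the inequality immediately.

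For the reverse inequality $\RF_{\Z^m, \Com(\varphi), B_{\Z^m}} \preceq \RF_{\Z^m, \Inv(\varphi), B_{\Z^m}}$, I would invoke Lemma \ref{lem_change_P} with $P = \Inv(\varphi)$ and $\bar P = \Com(\varphi)$. The hypothesis of that lemma asks that for every $N = \Im B \in \Inv(\varphi)$ there exists $\bar N = \Im B' \in \Com(\varphi)$ with $\bar N \subseteq N$ and $[\Z^m : \bar N] \leq C[\Z^m : N]$ for a uniform constant $C$. For integer matrices of full rank we have $[\Z^m : \Im B] = |\det B|$, so this requirement is exactly the content of Proposition \ref{prop_commut} with $C = M$.

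The substantive work has already been done in Proposition \ref{prop_commut}, where the Jordan--Zassenhaus theorem provided the uniform determinant blow-up constant $M$. The present theorem is therefore a formal packaging: combine $\Com(\varphi) \subseteq \Inv(\varphi)$ with Example \ref{ex:sub} in one direction, and Proposition \ref{prop_commut} with Lemma \ref{lem_change_P} in the other. There is no genuinely new difficulty to overcome at this step.
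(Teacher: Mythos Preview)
Your proof is correct and follows the same approach as the paper, which simply cites Proposition~\ref{prop_commut} and Lemma~\ref{lem_change_P}; you have just spelled out the two directions explicitly and added the helpful verification that $\bigcap_{N \in \Com(\varphi)} N = \{0\}$ via scalar matrices.
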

	\begin{proof}
		This follows directly from Proposition \ref{prop_commut} and Lemma \ref{lem_change_P}.
	\end{proof}

	\subsection{Proof of the Main Result}
	In this last subsection we will prove the claimed lower bound, and we will prove the main result, i.e. Theorem \ref{thm_main}.
	\begin{thm} \label{thm_lower}
		If $\varphi: H \to \GL(m, \Z)$ is $\mathbb{Q}$-irreducible and has an absolutely irreducible subspace of dimension $k$ over $\mathbb{C}$, then $\RF_{\Z^m, \Com(\varphi),B_{\Z^m}} \succeq \log^k$.
	\end{thm}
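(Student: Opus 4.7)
The plan is to produce, for every $r \geq 1$, a single nonzero element $v \in B_{\Z^m}(r)$ whose divisibility function satisfies $D_{\Z^m, \Com(\varphi)}(v) \succeq \log^k r$. The central input is Lemma \ref{lem_copy_mat}: any full-rank integer matrix $B$ commuting with $\varphi$ admits an integer $x \in \Z$ with $\det B = \pm x^k$ and $x\Z^m \subset \Im B$. In other words, indices of subgroups in $\Com(\varphi)$ are forced to be exact $k$-th powers, and the \emph{same} integer $x$ controls which multiples of standard basis vectors already lie in $\Im B$. This rigidity is what makes the lower bound possible.

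For the construction, let $P = P(r)$ be the largest integer with $N := \lcm(1, 2, \ldots, P) \leq r$, and take $v = N e_1$. Then $\|v\|_{\Z^m} = N \leq r$ in the standard word norm, which suffices by Corollary \ref{cor_independent}. Let $B$ be any full-rank integer matrix with $\Im B \in \Com(\varphi)$ and let $x$ be the integer supplied by Lemma \ref{lem_copy_mat}. If $|x| \leq P$, then $x \mid N$ by the choice of $N$, so $v = x \cdot (N/x) e_1 \in x\Z^m \subset \Im B$. Taking the contrapositive, any $B$ with $v \notin \Im B$ must satisfy $|x| \geq P+1$, and therefore
$$ [\Z^m : \Im B] \;=\; |\det B| \;=\; |x|^k \;\geq\; (P+1)^k. $$
Consequently $D_{\Z^m, \Com(\varphi)}(v) \geq (P+1)^k$; this minimum is attained because the scalar matrix $qI_m$ lies in $\Com(\varphi)$ for any prime $q > N$ and such a $B$ clearly omits $v$.

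To translate this into the desired asymptotic, I would invoke the Chebyshev estimate $\log\lcm(1, \ldots, P) \asymp P$. Since $\lcm(1, \ldots, P+1) > r$ by the maximality of $P$, this forces $P \succeq \log r$, and hence
$$ \RF_{\Z^m, \Com(\varphi), B_{\Z^m}}(r) \;\geq\; D_{\Z^m, \Com(\varphi)}(v) \;\geq\; (P+1)^k \;\succeq\; \log^k r, $$
as claimed. The main obstacle has already been absorbed into Lemma \ref{lem_copy_mat}: without the forced $k$-th power structure of $\det B$ together with the inclusion $x\Z^m \subset \Im B$ for the \emph{same} $x$, there would be no way to guarantee that one carefully chosen element simultaneously evades \emph{every} low-index subgroup in $\Com(\varphi)$. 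Once that lemma is in hand, the remaining step is the standard divisibility-plus-prime-number-theorem trick which, in the degenerate case $m = k = 1$, recovers the classical $\RF_{\Z} \succeq \log$.
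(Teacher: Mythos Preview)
Your proof is correct and follows essentially the same approach as the paper: both construct the test element $\lcm(1,\ldots,s)\,e_1$, invoke Lemma \ref{lem_copy_mat} to force $|x| > s$ whenever $\Im B$ omits it, and then convert $s$ into $\log r$ via the prime number theorem (Chebyshev estimate). The only cosmetic difference is that you parametrize by the radius $r$ and choose the maximal $P$ with $\lcm(1,\ldots,P)\le r$, whereas the paper parametrizes by $s$ and evaluates $\RF$ at $r=\lcm(1,\ldots,s)$.
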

	\begin{proof}
		Take the first standard basis vector $v$ of $\Z^m$ and consider the elements $$v_s = \lcm(1,2, \dots , s)v \in B_{\Z^m}(\lcm(1,2, \dots , s))$$ for any $s \in \N$. We first show that $D_{\Z^m, \Com(\varphi)}(v_s) \geq s^k$. Indeed, take any $\Im B \in \Com(\varphi)$ such that $v_s \notin \Im B$. By lemma \ref{lem_copy_mat}, we know that $|\det B| = x^k$ for some $x\in \Z$ and $x\Z^m \subset \Im B$. Thus $v_s$ does also not lie in $x\Z^m$. As $v_s \in l\Z^m$ for any $1 \leq l \leq s$, we conclude that $s < x$, implying that $|\det B| \geq s^k$.
		
		Now, by the prime number theorem, this implies that $D_{\Z^m, \Com(\varphi)}(v_s) \geq (D\cdot \log(\lcm(1,2, \dots , s)))^k$ for some fixed constant $D>0$, so
		$$ \RF_{\Z^m, \Com(\varphi),B_{\Z^m}}(\lcm(1,2, \dots , s)) \geq (D\cdot \log(\lcm(1,2, \dots , s)))^k.$$ This ends the proof.	
	\end{proof}
	All the previous results show that Theorem \ref{thm_main} holds:
	\begin{proof}[Proof of Theorem \ref{thm_main}]
		The upper bound follows by the equalities
		$$\RF_{G, \nu(G), B_G} \approx \RF_{K, \Inv(\bar{\varphi}),B_{K}} \preceq \log^k$$
		by Theorems \ref{thm_finite_extension} and \ref{thm_upper}.\\
		The lower bound follows by the equalities
		\begin{equation*}
			\begin{split}
				\RF_{G, \nu(G), B_G} & \approx \RF_{K, \Inv(\varphi),B_{K}}\\
				& \approx \max\left\{\RF_{K_i, \Inv(\varphi_i),B_i} \mid 1\leq i\leq n \right\} \\
				& \approx \max\left\{\RF_{K_i, \Com(\varphi_i),B_i} \mid 1\leq i\leq n \right\}\\
				& \succeq \max\left\{ \log^{k_i} \mid 1\leq i\leq n \right\}\\
				& \approx \log^k,			\end{split}
		\end{equation*}
	where $k_i$ is the dimension of the $\mathbb{C}$-irreducible subrepresentations of $\varphi_i$. Here we used consecutively Theorems \ref{thm_finite_extension} and \ref{thm_irred_Q_split}, Proposition \ref{thm_reduc_to_comm} and Theorem \ref{thm_lower}.
	\end{proof}

\section{Applications and examples} \label{sec_applic}
	If the maximal torsion-free abelian subgroup of a virtually abelian group $G$ has rank $m$, then Theorem \ref{thm_main} says that $\RF_G$ equals $\log^k$ for some $1\leq k \leq m$. The following corollary tells us that this result is optimal, namely here exists such a group for every $k$ between $1$ and $m$. 
	\begin{cor}
		For every $m\geq 1$ and every $1 \leq k \leq m$, there exists a semidirect product $G = \Z^m \rtimes_\varphi H$ for a finite group $H$, such that $\RF_G$ equals $\log^k$. In particular, for every $k\in \mathbb{N}$, there exists a finitely generated, residually finite group $G$ such that $\RF_G$ equals $\log^k$.
	\end{cor}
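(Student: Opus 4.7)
The plan is to apply Theorem \ref{thm_main} directly. Given $m \geq 1$ and $1 \leq k \leq m$, I will construct an integer representation $\varphi \colon H \to \GL(m,\Z)$ of a finite group $H$ whose maximal irreducible $\C$-subrepresentation has dimension exactly $k$, and then take $G = \Z^m \rtimes_\varphi H$. The resulting group is automatically finitely generated and residually finite, since it is a finitely generated virtually abelian group, and $\Z^m$ is a torsion-free abelian normal subgroup of maximal rank on which $H$ acts via $\varphi$. Theorem \ref{thm_main} then delivers $\RF_G \approx \log^k$.

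For the building block I would take $H = S_{k+1}$, the symmetric group on $k+1$ letters, together with its \emph{standard representation} $\rho \colon S_{k+1} \to \GL(k,\Z)$ realised on the root sublattice $\{(a_1, \dots, a_{k+1}) \in \Z^{k+1} \mid \sum_i a_i = 0\} \cong \Z^k$. This representation is absolutely irreducible over $\C$, a classical fact from the character theory of symmetric groups. Hence the diagonal case $k = m$ is already settled by taking $\varphi = \rho$.

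For $1 \leq k < m$ I would pad with trivial summands, defining $\varphi = \rho \oplus \mathrm{triv}^{\,m-k} \colon S_{k+1} \to \GL(m,\Z)$. Over $\C$ this decomposes as one irreducible subrepresentation of dimension $k$ together with $m-k$ one-dimensional trivial subrepresentations, so the maximal $\C$-irreducible subrepresentation dimension equals exactly $k$. Applying Theorem \ref{thm_main} to $G = \Z^m \rtimes_\varphi S_{k+1}$ yields $\RF_G \approx \log^k$, proving the first claim. The ``In particular'' statement follows at once by specialising to $m = k$ (and, should one include $0$ in $\N$, by taking $G$ finite for the case $k = 0$).

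I do not expect a substantial obstacle: the corollary reduces, via Theorem \ref{thm_main}, to a purely representation-theoretic existence question, and the $\C$-irreducibility of the standard representation of $S_{k+1}$ is standard. The only item requiring a moment of verification is that $\rho$ takes integer values on the chosen root-lattice basis, which is immediate from the integrality of the underlying permutation action.
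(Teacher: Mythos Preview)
Your proposal is correct and follows essentially the same route as the paper: both use the standard (sum-zero) representation of $S_{k+1}$ as an absolutely irreducible $k$-dimensional integral representation, pad it with $m-k$ trivial summands, and then invoke Theorem~\ref{thm_main}. The paper phrases the padding as $\varphi_k \times \Id_{m-k}$ and cites \cite{serre1977linear} for the irreducibility, but the argument is the same.
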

	\begin{proof}
		Consider the permutation representation of the symmetric group $S_{k+1}$ on $\Z^{k+1}$. It is known that it decomposes into two absolutely irreducible representations over $\mathbb{Q}$ corresponding to the subspaces $\left\{(l,l, \dots, l) \in \mathbb{Q}^{k+1}\mid l\in \mathbb{Q} \right\}$ and $\left\{(l_1,l_2, \dots ,l_{k+1})\mid \sum_{i=1}^{k+1}l_i = 0
\right\}$, see for example \cite[exercise 2.6]{serre1977linear}. Therefore, there exists an absolutely irreducible representation $\varphi_k: S_{k+1} \to \GL(k, \Z)$.	The result follows by taking $G$ to be $\Z^m\rtimes_\varphi S_{k+1}$, where $\varphi = \varphi_k \times \Id_{m-k}$.
	\end{proof}

Since virtually abelian groups are quasi-isometric if and only if their maximal torsion-free abelian subgroups have the same rank, the following result is now evident:
\begin{cor}
	\label{cor:QI}
	Residual finiteness growth is not a quasi-isometric invariant in the class of virtually abelian groups. In fact, if $G$ is a virtually abelian group with torsion-free abelian subgroup of rank at least two, then there exists a group $G^\prime$ quasi-isometric to $G$ with $\RF_G \not \approx \RF_{G^\prime}$.
\end{cor}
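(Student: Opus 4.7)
The plan is to combine the classification of virtually abelian groups up to quasi-isometry with the previous corollary producing examples realising every exponent $k$ between $1$ and $m$. Given $G$ virtually abelian whose torsion-free abelian subgroup has rank $m \geq 2$, Theorem~\ref{thm_main} provides some $k$ with $1 \leq k \leq m$ such that $\RF_G \approx \log^k$. Since $m \geq 2$, the set $\{1,\dots,m\}\setminus\{k\}$ is non-empty, so I can pick some $k^\prime \in \{1,\dots,m\}$ with $k^\prime \neq k$. The preceding corollary then furnishes a virtually abelian group $G^\prime$ with torsion-free abelian subgroup of rank $m$ and $\RF_{G^\prime} \approx \log^{k^\prime}$.

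Next, I would justify the quoted quasi-isometry classification. Any finitely generated virtually abelian group with torsion-free abelian subgroup of rank $m$ contains a finite-index copy of $\Z^m$; since passing to a finite-index subgroup induces a quasi-isometry, both $G$ and $G^\prime$ are quasi-isometric to $\Z^m$ and hence to each other. This is the content of the informal remark that precedes the statement, and is the only ``geometric'' ingredient needed.

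It then remains to observe that $\log^k \not\approx \log^{k^\prime}$ whenever $k\neq k^\prime$, which is a trivial comparison of growth rates under the equivalence $\approx$: if $\log^k(r) \leq C \log^{k^\prime}(Cr)$ for all large $r$, then taking $r\to\infty$ forces $k\leq k^\prime$, and symmetry gives equality. Combining this with the previous paragraph yields $\RF_G \not\approx \RF_{G^\prime}$, which is exactly the conclusion of Corollary~\ref{cor:QI}.

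No step appears to be a real obstacle: everything rests on Theorem~\ref{thm_main} and the previous corollary, both already established, together with the standard observation that virtually abelian groups sharing the same virtual rank are quasi-isometric. The one point that deserves a short sentence rather than being left implicit is precisely this quasi-isometry classification, which I would mention explicitly rather than just refer to as ``evident''.
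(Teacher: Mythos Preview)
Your proposal is correct and follows exactly the approach the paper takes. The paper does not even give a formal proof of this corollary: it simply remarks that virtually abelian groups are quasi-isometric if and only if their maximal torsion-free abelian subgroups have the same rank, and declares the result ``evident'' from this fact together with the preceding corollary. Your write-up just makes the implicit steps explicit (choosing $k'\neq k$, invoking the finite-index quasi-isometry to $\Z^m$, and checking $\log^k\not\approx\log^{k'}$), which is entirely appropriate.
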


We also wish to remark that the power $k$ can be effectively computed from the character table of the finite group $H$. To illustrate this point, let us recall some notions. For further details, we refer to \cite[Chapter 2]{isaacs2006character}.
\begin{df}
	Let $\varphi: H \to \GL(m, \mathbb{C})$ be a complex representation of a finite group $H$. The character afforded by $\varphi$, denoted by $\chi_\varphi$, is the function \begin{align*}\chi_\varphi: H &\to \mathbb{C}\\ h &\mapsto \text{Trace}(\varphi(h)).\end{align*}
\end{df}
Here $\varphi(h)$ is represented as a matrix with respect to any basis of $\mathbb{C}^m$. Note that $\chi_\varphi(e_H)$ must equal the dimension $m$ of $\varphi$. A finite group $H$ allows only a finite number of non-equivalent irreducible representations, say $\varphi_1$ to $\varphi_r$. Let $\chi_1$ to $\chi_r$ denote the corresponding characters, called irreducible characters. 

Suppose $\zeta_1$ and $\zeta_2$ are two characters of $H$, then we can define the following inner product:
\begin{df}
	$$\langle \zeta_1, \zeta_2\rangle = \frac{1}{\vert H \vert} \sum_{h\in H} \zeta_1(h)\overline{\zeta_2(h)} \in \mathbb{C}.$$
\end{df}
With respect to this inner product, the irreducible characters arise as an orthonormal basis in the sense that $\langle \chi_i, \chi_j \rangle = \delta_{i,j}$, where $\delta_{i,j}$ is the Kronecker delta, and if $\chi_\varphi$ is any character, then
$$\chi_\varphi = \sum_{i=1}^r\langle \chi_\varphi , \chi_i \rangle \chi_i.$$
In terms of the original representation, this means $\varphi_i$ arises $\langle \chi_\varphi, \chi_i\rangle$ times as a subrepresentation of $\varphi$.

We obtain the following theorem:
\begin{thm} \label{thm_character_table}
	Let $\varphi: H \to \GL(m, \Z)$ be a representation of a finite group $H$. Then $\RF_{\Z^m, \Inv(\varphi), B_{\Z^m}}$ equals $\log^k$ with $k = \max\left\{\chi_i(e_H)\mid 1\leq i\leq r: \langle \chi_\varphi, \chi_i \rangle \neq 0
	\right\}$, where $\chi_1$ to $\chi_r$ are the irreducible characters of $H$.
\end{thm}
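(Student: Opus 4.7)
The plan is to deduce this directly from Theorem \ref{thm_main}, using only standard character theory to re-express the number $k$ appearing there.

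By Theorem \ref{thm_main}, we already know that $\RF_{\Z^m, \Inv(\varphi), B_{\Z^m}}$ equals $\log^k$, where $k$ is the maximal dimension of the $\mathbb{C}$-irreducible subrepresentations of $\varphi$. So the task reduces entirely to showing that this maximal dimension coincides with $\max\{\chi_i(e_H) \mid \langle \chi_\varphi, \chi_i\rangle \neq 0\}$.

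First, I would recall the two standard facts from character theory that have already been introduced just before the statement. On the one hand, for any complex representation $\psi$, the value $\chi_\psi(e_H)$ equals the dimension of $\psi$, so in particular $\chi_i(e_H) = \dim \varphi_i$ for each irreducible character $\chi_i$. On the other hand, since the irreducible characters $\chi_1, \ldots, \chi_r$ form an orthonormal basis, one has the decomposition
\[
\chi_\varphi = \sum_{i=1}^r \langle \chi_\varphi, \chi_i \rangle \, \chi_i,
\]
where the non-negative integer $\langle \chi_\varphi, \chi_i \rangle$ is precisely the multiplicity of $\varphi_i$ as a $\mathbb{C}$-subrepresentation of $\varphi$. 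In particular, $\varphi_i$ appears as a subrepresentation of $\varphi^{\mathbb C}$ if and only if $\langle \chi_\varphi, \chi_i \rangle \neq 0$.

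Combining these two facts, the set of dimensions of irreducible $\mathbb{C}$-subrepresentations of $\varphi$ is exactly $\{\chi_i(e_H) \mid 1 \leq i \leq r, \, \langle \chi_\varphi, \chi_i \rangle \neq 0\}$, and taking the maximum gives the value of $k$ from Theorem \ref{thm_main}. Substituting this expression for $k$ into the conclusion of Theorem \ref{thm_main} then yields exactly the claimed statement. There is no real obstacle here; the only subtlety is being careful that the maximum is taken only over those irreducible characters that actually occur in $\chi_\varphi$, since an irreducible character with zero multiplicity corresponds to a representation that does not sit inside $\varphi$ and hence should not contribute to $k$.
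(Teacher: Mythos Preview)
Your proposal is correct and follows essentially the same approach as the paper: both invoke Theorem \ref{thm_main} and then use the standard character-theoretic facts that $\chi_i(e_H)$ is the dimension of $\varphi_i$ and that $\langle \chi_\varphi,\chi_i\rangle \neq 0$ precisely when $\varphi_i$ occurs as a subrepresentation, to identify the $k$ from Theorem \ref{thm_main} with the maximum in the statement. The paper's proof is slightly more terse, but there is no substantive difference.
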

\begin{proof}
	If $\langle \chi_\varphi, \chi_i \rangle \neq 0$, then the irreducible representation $\varphi_i$ is a subrepresentation of $\varphi$. Its dimension is $\chi_i(e_H)$. Hence, by Theorem \ref{thm_main}, the maximum of the set $\left\{\chi_i(e_H)\mid 1\leq i\leq r: \langle \chi_\varphi, \chi_i \rangle \neq 0
	\right\}$ is precisely the power of the logarithm.
\end{proof}

If $h_1$ and $h_2$ are conjugate and $\chi_\varphi$ is a character, then $\chi_\varphi(h_1) = \chi_\varphi(h_2)$. Therefore, characters are fully determined by the values on their conjugacy classes $C_1$ to $C_t$.
\begin{df}
	The character table of a finite group $H$ is the matrix $(\chi_i(C_j))_{1\leq i\leq r, 1 \leq j \leq t}$ consisting of the value of every irreducible character per conjugacy class.
\end{df}
Usually, the characters are ordered according to increasing dimension. 

\begin{ex} \label{ex_D4}
	The character table of $D_4 = \{a, b\mid a^4=b^2=1, bab = a^{-1}\}$, the dihedral group on $8$ elements, is given in Table \ref{table_D4}. There are five irreducible characters.
	
	Suppose we are given the representation $\varphi: D_4 \to \GL(m, \Z)$ with \begin{align*}
		\varphi(a) &= \begin{pmatrix}-1&1&-1\\-2&0&-1\\2&-1&2 \end{pmatrix} \\ \varphi(b) &= \begin{pmatrix}-3&0&-2\\0&1&0\\4&0&3\end{pmatrix}.
	\end{align*}
	The values of its character $\chi_\varphi$ have been added in Table \ref{table_D4}. Using those values, we can compute $\langle \chi_\varphi, \chi_5\rangle$:
	$$\langle \chi_\varphi, \chi_5\rangle = \frac{1}{8}\left( 1\cdot (3\cdot 2) + 2\cdot (1\cdot 0) + 1\cdot (-1\cdot (-2)) + 2\cdot (1\cdot 0) + 2\cdot (1 \cdot 0)\right) = 1.$$
	Hence, the two-dimensional irreducible representation corresponding to $\chi_5$ occurs in the decomposition of $\varphi$. Therefore, $\RF_{\Z^m, \Inv(\varphi), B_{\Z^m}}$ equals $\log^2$. (In fact, one can verify that 
	$\chi_\varphi = \chi_1 + \chi_5$.)
	\begin{table}
		\centering
	\begin{tabular}{r | c c c c c}
		& $\{1\}$ & $\{a, a^3\}$ & $\{a^2\}$ & $\{ab, a^3b\}$ & $\{b, a^2b\}$ \\ \hline
		$\chi_1$ & 1 & 1 & 1&1&1\\
		$\chi_2$ & 1 & -1 & 1 & -1 & 1 \\
		$\chi_3$ & 1 & 1 & 1 & -1 & -1\\
		$\chi_4$ & 1 & -1&1 & 1 & -1 \\
		$\chi_5$ & 2 & 0 & -2 & 0 & 0 \\ \hline
		$\chi_\varphi$ & 3 & 1 & -1 & 1 & 1
	\end{tabular}
	\caption{The character table of $D_4$ and the values of the character $\chi_\varphi$ of Example \ref{ex_D4}}.
	\label{table_D4}
	\end{table}
\end{ex}

\begin{cor}
	Let $\varphi: H \to \GL(m, \Z)$ be a representation of a finite group $H$. Then $\RF_{\Z^m, \Inv(\varphi), B_{\Z^m}}$ equals $\log$ if and only if $\varphi(H) \leq \GL(m, \Z)$ is an abelian group.
\end{cor}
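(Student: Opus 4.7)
The plan is to deduce the corollary directly from the characterization in Theorem \ref{thm_character_table}: since $\RF_{\Z^m, \Inv(\varphi), B_{\Z^m}}$ equals $\log^k$ for $k$ the maximal dimension of an irreducible $\mathbb{C}$-subrepresentation of $\varphi$, the statement $\RF \approx \log$ is equivalent to saying that every irreducible $\mathbb{C}$-subrepresentation of $\varphi$ is one-dimensional, i.e.\ $k = 1$. The task therefore reduces to the purely linear-algebraic equivalence: the $\mathbb{C}$-representation $\varphi^{\mathbb{C}}$ decomposes entirely into one-dimensional subrepresentations if and only if the group $\varphi(H) \subset \GL(m, \Z)$ is abelian.

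For the forward direction, I would assume $\varphi(H)$ is abelian and argue simultaneous diagonalizability over $\mathbb{C}$. Each $\varphi(h)$ has finite order, so its minimal polynomial divides $X^{|H|} - 1$, which is separable over $\mathbb{C}$; hence each $\varphi(h)$ is diagonalizable over $\mathbb{C}$. A commuting family of diagonalizable matrices is simultaneously diagonalizable, so $\mathbb{C}^m$ admits a basis of common eigenvectors of all $\varphi(h)$. Each such eigenline is a one-dimensional $\varphi^{\mathbb{C}}$-invariant subspace, and by complete reducibility this forces every $\mathbb{C}$-irreducible subrepresentation to be one-dimensional, giving $k = 1$.

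For the reverse direction, suppose every $\mathbb{C}$-irreducible subrepresentation of $\varphi$ has dimension one. Then we may write $\mathbb{C}^m = L_1 \oplus \dots \oplus L_m$ as a direct sum of one-dimensional $\varphi^{\mathbb{C}}$-invariant subspaces. In a basis adapted to this decomposition, every $\varphi(h)$ is represented by a diagonal matrix. Since diagonal matrices commute pairwise, and commutativity of matrices is invariant under simultaneous conjugation, the original matrices $\varphi(h) \in \GL(m, \Z)$ also commute. Hence $\varphi(H)$ is abelian.

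There is no real obstacle: both implications rest on the classical fact that a commuting family of finite-order matrices over $\mathbb{C}$ is simultaneously diagonalizable, combined with the formulation of Theorem \ref{thm_character_table}. The only mild care required is to note that commutativity is basis-independent, so that passing between the $\mathbb{Z}$-basis (in which $\varphi(H) \subset \GL(m, \Z)$ sits) and the $\mathbb{C}$-basis of common eigenvectors is harmless.
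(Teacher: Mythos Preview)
Your proof is correct and follows essentially the same route as the paper: both reduce via Theorem~\ref{thm_main} (equivalently Theorem~\ref{thm_character_table}) to the statement that $k=1$ if and only if $\varphi(H)$ is abelian, and both establish the converse direction by noting that a decomposition into one-dimensional invariant lines makes $\varphi(H)$ conjugate to a group of diagonal matrices. The only cosmetic difference is that for the forward direction the paper first replaces $\varphi$ by the faithful inclusion $\psi:\varphi(H)\hookrightarrow \GL(m,\Z)$ and then cites \cite[Corollary~2.6]{isaacs2006character} (a finite group is abelian iff all its irreducible characters have degree one), whereas you argue directly via simultaneous diagonalization of commuting finite-order matrices; your version avoids the auxiliary reduction to $\psi$ and the external citation, at the cost of spelling out a classical linear-algebra fact.
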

\begin{proof}
It is clear that $\Inv(\varphi)$ equals $\Inv(\psi)$ with $\psi: \varphi(H) \to \GL(m, \Z)$ the natural inclusion map. Hence, $\RF_{\Z^m, \Inv(\varphi), B_{\Z^m}}$ equals $\RF_{\Z^m, \Inv(\psi), B_{\Z^m}}$.
	
	By \cite[Corollary 2.6]{isaacs2006character}, a finite group is abelian if and only if all its irreducible characters are one-dimensional. Thus, if $\varphi(H)$ is abelian, then its irreducible characters are one-dimensional, and therefore theorem \ref{thm_character_table} tells that its residual finiteness growth is $\log$. Conversely, if $\RF_{\Z^m, \Inv(\psi), B_{\Z^m}}$ is $\log$, then the irreducible subrepresentations of $\psi^\C$ are one-dimensional and thus $\varphi(H)$ must be abelian, as it is conjugate to a subgroup of diagonal matrices.
\end{proof}

	\section{Open questions} \label{sec_open_questions}
Our main result, namely Theorem \ref{thm_main}, describes the residual finiteness growth $\RF_G$ for finitely generated virtually abelian groups $G$. Hence, it forms the first step in determining $\RF_G$ for all finitely generated virtually nilpotent groups, a question following from previous work in \cite{bou2011asymptotic} and which is still open in full generality, see \cite{survey2022}. However, the exact nature of our work raises related problems in determining the residual finiteness growth.

As mentioned before in Corollary \ref{cor:QI}, we concluded that $\RF_G$ is not a quasi-isometric invariant for virtually abelian groups $G$ in a very strong sense, although it (trivially) is one in the class of abelian groups. It is currently unknown whether the residual finiteness growth is a quasi-isometric invariant for finitely generated nilpotent groups. 

\begin{ques}
	Let $G_1$ and $G_2$ be finitely generated nilpotent groups that are quasi-isometric. Does it hold that $\RF_{G_1} \approx \RF_{G_2}$?
\end{ques}

Another interesting fact following from our main result is that $\RF_G$ only depends on properties of the representation over the complex numbers. Recall that every finitely generated torsion-free nilpotent group $G$ has a unique Mal'cev completion $G^\mathbb{F}$ over any field $\mathbb{F}$ of characteristic zero (see \cite{sega83-1} for more details). Motivated by the virtually abelian groups, we wonder whether $\RF_G$ only depends on $G^\C$.

\begin{ques}
Let $G_1$ and $G_2$ be finitely generated torsion-free nilpotent groups with isomorphic complex Mal'cev completions $G_1^\C \cong G_2^\C$. Does it hold that $\RF_{G_1} \approx \RF_{G_2}$? 
\end{ques}
Both questions are related in the following sense. If $G_1$ and $G_2$ are quasi-isometric nilpotent groups, then it is conjectured that their real Mal'cev completions $G_1^{\R}$ and $G_2^{\R}$ are isomorphic (it is even conjectured that the converse holds as well) and in particular their complex Mal'cev completions as well. Hence the latter question is a stronger version of the first one if the conjecture holds.

Even more generally, we could study the same question for virtually nilpotent groups, where we also have a representation of a finite group $H \to \Aut(G^\C)$ into the automorphisms of the complex Mal'cev completion, for example as described in the rational case in \cite[Section 4.1.]{dere22-1}. 

\bibliographystyle{plain}
\bibliography{Dere_Matthys_virtually_abelian}
\end{document}